\newtheorem{theorem}{Theorem}[section]
\newtheorem{lemma}[theorem]{Lemma}
\theoremstyle{definition}
\newtheorem{definition}[theorem]{Definition}
\newtheorem{example}[theorem]{Example}
\theoremstyle{remark}
\begin{document}
\title{Local stability of energy estimates for the Navier--Stokes equations.}

\author{Diego Chamorro}
\address{LAMME, Univ Evry, CNRS, Universit\'e Paris-Saclay, 91025, Evry, France} 
\curraddr{IBGBI, 23 bd de France, 91037 \'Evry cedex, France}
\email{diego.chamorro@univ-evry.fr}
 
\author{Pierre Gilles Lemari\'e-Rieusset}
\address{LAMME, Univ Evry, CNRS, Universit\'e Paris-Saclay, 91025, Evry, France}
\curraddr{IBGBI, 23 bd de France, 91037 \'Evry cedex, France}
\email{pierregilles.lemarierieusset@univ-evry.fr}
 
\author{Kawther Mayoufi}
\address{LAMME, Univ Evry, CNRS, Universit\'e Paris-Saclay, 91025, Evry, France}
\curraddr{IBGBI, 23 bd de France, 91037 \'Evry cedex, France}
\email{kawther.mayoufi@univ-evry.fr}
 
\subjclass{Primary  35Q30, 76D05}
\date{.}

\keywords{Navier--Stokes equations; Partial regularity; Caffarelli, Kohn and Nirenberg theory; Local energy inequalities}

\begin{abstract}
We study the regularity of the weak limit of a sequence of dissipative solutions to the Navier--Stokes equations when no assumptions is made on the behavior of the pressures.
\end{abstract}

\maketitle
\section{Local weak solutions.}\label{LOCA}
In this paper, we are interested in local properties (regularity, local energy estimates) of weak solutions of Navier--Stokes equations. 
   
 \begin{definition}[\bf Local weak solutions] Let $\Omega$ be a   domain in $\mathbb{R}\times\mathbb{R}^3$ and $\vec f\in L^2_{\rm loc}(\Omega)$ a divergence-free time-dependent vector field. A vector field $\vec u$ will be said to be a \emph{local weak solution} of the Navier--Stokes equations on $\Omega$ (associated to the force $\vec f$) if, for each  cylinder $Q=I\times O$ (where $I$ is an open  interval in  $\mathbb{R}$ and $O$ an open subset of $\mathbb{R}^3$) such that $\bar Q$ is a compact subset of $\Omega$, we have $\vec u\in L^\infty_tL^2_x(Q)\cap L^2_t H^1_x(Q)$, $\vec u$ is divergence-free and, for every smooth compactly supported divergence-free vector field $\vec\phi\in \mathcal{D}(Q)$ we have
\begin{equation}\label{WNS} \iint_Q \vec u\cdot(\partial_t\vec\phi+\Delta\vec\phi)+ \vec u\cdot(\vec u\cdot\vec\nabla\vec\phi)+\vec f\cdot\vec\phi
 \, dt\, dx=0.\end{equation}
 \end{definition}
More precisely, we shall address the behavior of a weak limit of regular solutions. 

 \begin{definition}[\bf Regular local   solutions] Let $\Omega$ be a domain in $\mathbb{R}\times\mathbb{R}^3$ and $\vec f\in L^2_{\rm loc}(\Omega)$ a divergence-free time-dependent vector field and    $\vec u$    a local weak solution of the Navier--Stokes equations on $\Omega$ (associated to the force $\vec f$).
\begin{itemize}
\item[A)] $\vec u$ is a \emph{regular local solution} if, for each  cylinder $Q\subset\subset \Omega$,   we have $\vec u\in L^\infty_{t,x}(Q) $,  
\item[B)]  The set  $R(\vec u)$ of \emph{regular points} of $\vec u$ is the largest open subset of $\Omega$ on which $\vec u$ is a regular solution. The set $\Sigma(\vec u)$ of \emph{singular points} is the complement of $R(\vec u)$ : $\Sigma(\vec u)=\Omega\setminus R(\vec u)$.
\end{itemize}
 \end{definition}
Our result is then the following one \cite{MAY} :
 
  \begin{theorem}[\bf Singular points of a weak limit.]\label{TheoCLM}
 Let $\Omega$ be a   domain in $\mathbb{R}\times\mathbb{R}^3$. Assume that we have sequences $\vec f_n$ of  divergence-free time-dependent vector fields and    $\vec u_n$  of   local  weak solutions of the Navier--Stokes equations on $\Omega$ (associated to the forces $\vec f_n$) such that, for each  cylinder $Q\subset\subset \Omega$,   we have
 \begin{itemize} 
 \item $\vec f_n\in L^2_t H^1_x(Q)$ and $\vec f_n$ converges weakly in $L^2_{t} H^1_{x}$ to a limit $\vec f$,
 \item the sequence $\vec u_n$ is bounded in $L^\infty_tL^2_x(Q)\cap L^2_t H^1_x(Q)$ and converges weakly in $ L^2_t H^1_x(Q)$ to  a limit $\vec u$ ,
 \item for every $n$, $\vec u_n$ is bounded on $Q$.
 \end{itemize}
 Then the limit $\vec u$ is a local weak solution on $\Omega$ of the Navier--Stokes equations associated to the force $\vec f$, and its set $\Sigma(\vec u)$ has parabolic one-dimensional Hausdorff measure equal to $0$.
\end{theorem}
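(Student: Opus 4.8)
The plan is to proceed in three stages: first pass to the weak limit to show that $\vec u$ solves the equations, then upgrade this to a local energy inequality for $\vec u$ (the delicate point, since no control on the pressures is available), and finally run a Caffarelli--Kohn--Nirenberg $\varepsilon$-regularity argument together with a covering estimate.

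\textbf{Stage 1 (the limit is a local weak solution).} Fix a cylinder $Q\subset\subset\Omega$. In \eqref{WNS} the only term that is not linear in $\vec u_n$ is the transport term $\iint_Q\vec u_n\cdot(\vec u_n\cdot\vec\nabla\vec\phi)$, so it suffices to upgrade the weak $L^2_tH^1_x$ convergence to strong convergence in $L^2_{t,x}$ on interior cylinders. Testing \eqref{WNS} against divergence-free $\vec\phi$ — where the pressure never appears — bounds $\partial_t\vec u_n$, as a functional on divergence-free fields, in a negative-order parabolic space (using that $\vec u_n$ is bounded in $L^\infty_tL^2_x\cap L^2_tH^1_x$, hence in $L^{10/3}_{t,x}$ and $\vec u_n\otimes\vec u_n$ in $L^{5/3}_{t,x}$). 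Combined with the uniform $L^2_tH^1_x$ bound, the Aubin--Lions lemma gives strong $L^2_{t,x}$ convergence on every $Q'\subset\subset Q$; the limit being prescribed, the whole sequence converges. Then $\vec u_n\otimes\vec u_n\to\vec u\otimes\vec u$ in $L^1_{t,x}$, every term of \eqref{WNS} passes to the limit, and $\vec u$ is a local weak solution associated with $\vec f$.

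\textbf{Stage 2 (local energy inequality for $\vec u$ — the main obstacle).} Each $\vec u_n$ is bounded, hence smooth in the interior by parabolic bootstrapping from \eqref{WNS}, and satisfies the exact local energy balance
\begin{equation*}
\partial_t\tfrac{|\vec u_n|^2}{2}+|\vec\nabla\vec u_n|^2=\Delta\tfrac{|\vec u_n|^2}{2}-\vec\nabla\cdot\Big(\big(\tfrac{|\vec u_n|^2}{2}+p_n\big)\vec u_n\Big)+\vec f_n\cdot\vec u_n,
\end{equation*}
where the local pressure $p_n$ enters only through the flux $\vec\nabla\cdot(p_n\vec u_n)$. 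This is where the genuine difficulty lies: since nothing is assumed on $p_n$, this flux cannot be passed to the limit naively. I would neutralize the pressure by a local decomposition $p_n=p_n^{\mathrm{loc}}+p_n^{\mathrm{har}}$ on each sub-cylinder, with $p_n^{\mathrm{loc}}=\sum_{i,j}R_iR_j(u_{n,i}u_{n,j}\chi)$ a double Riesz transform of the localized tensor (uniformly bounded in $n$ by the energy estimate and strongly convergent by Stage 1) and $p_n^{\mathrm{har}}$ spatially harmonic on the sub-cylinder. Following a local pressure projection in the spirit of Wolf's construction, one tests the balance against $\phi\geq0$ in a way that either cancels the harmonic part or replaces it by interior harmonic estimates that are uniformly controlled on strictly smaller cylinders, so that the flux converges. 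The quadratic flux and the forcing $\vec f_n\cdot\vec u_n$ converge by the strong $L^2$ convergence of $\vec u_n$ and the weak convergence of $\vec f_n$, while weak lower semicontinuity gives $\iint|\vec\nabla\vec u|^2\phi\le\liminf_n\iint|\vec\nabla\vec u_n|^2\phi$. The semicontinuity turns the balance into an inequality, yielding that $\vec u$ is a \emph{suitable} weak solution:
\begin{equation*}
\partial_t\tfrac{|\vec u|^2}{2}+|\vec\nabla\vec u|^2\le\Delta\tfrac{|\vec u|^2}{2}-\vec\nabla\cdot\Big(\big(\tfrac{|\vec u|^2}{2}+p\big)\vec u\Big)+\vec f\cdot\vec u\qquad\text{in }\mathcal D'(Q).
\end{equation*}

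\textbf{Stage 3 ($\varepsilon$-regularity and covering).} With the local energy inequality and $\vec f\in L^2_tH^1_x$ in hand, the Caffarelli--Kohn--Nirenberg gradient criterion applies: there is an absolute $\varepsilon_0>0$ such that $\limsup_{r\to0}\frac1r\iint_{Q_r(z_0)}|\vec\nabla\vec u|^2\,dt\,dx<\varepsilon_0$ forces $z_0\in R(\vec u)$, whence
\begin{equation*}
\Sigma(\vec u)\subseteq\Big\{z_0:\ \limsup_{r\to0}\tfrac1r\iint_{Q_r(z_0)}|\vec\nabla\vec u|^2\,dt\,dx\ge\varepsilon_0\Big\}.
\end{equation*}
Since $|\vec\nabla\vec u|^2\in L^1(\bar Q)$, the measure $d\mu=|\vec\nabla\vec u|^2\,dt\,dx$ is finite, and a Vitali covering argument shows the set where its parabolic upper $1$-density is at least $\varepsilon_0$ is Lebesgue-null, hence $\mu$-null since $\mu\ll\mathcal L^5$; a second application of the covering with $\mu$ restricted to a shrinking neighborhood then forces its parabolic one-dimensional Hausdorff measure to vanish, so $\mathcal P^1(\Sigma(\vec u))=0$. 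Everything outside Stage~2 is a robust adaptation of the classical partial-regularity machinery; the whole weight of the argument rests on making the local energy inequality survive the weak limit despite the uncontrolled pressures.
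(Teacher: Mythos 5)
Your Stage 1 contains a genuine error, and it is precisely the trap the paper is built around. Testing \eqref{WNS} against divergence-free fields controls $\partial_t\vec u_n$ only as a functional on $\mathcal{D}_\sigma(Q)$, i.e.\ modulo gradients; in this purely local setting that is not enough for Aubin--Lions to give strong convergence of $\vec u_n$ itself in $L^2_{t,x}$. The paper's adapted Serrin example, $\vec u_n(t,x)=\cos(nt)\,(x_1,-x_2,0)$, satisfies every hypothesis of the theorem (a bounded local weak solution with zero force, uniformly bounded local energy, converging weakly to $0$), yet $\|\vec u_n\|_{L^2_{t,x}(Q)}$ does not tend to $0$ and $(\vec u_n\cdot\vec\nabla)\vec u_n\rightharpoonup \frac12(x_1,x_2,0)\neq 0$ in $\mathcal{D}'$. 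So $\vec u_n\otimes\vec u_n$ need not converge to $\vec u\otimes\vec u$ and your passage to the limit in the transport term fails as stated. The paper's way out is to apply the Rellich/Aubin--Lions compactness not to $\vec u_n$ but to the vorticity $\vec\omega_n=\vec\nabla\wedge\vec u_n$, whose equation contains no pressure, so that $\partial_t\vec\omega_n$ is genuinely bounded in $(L^2_tH^{\sigma_1}_x)_{\rm loc}$; one then gets $\vec\omega_n\wedge\vec u_n\rightharpoonup\vec\omega\wedge\vec u$, which suffices because $(\vec u_n\cdot\vec\nabla)\vec u_n-\vec\omega_n\wedge\vec u_n$ is a gradient and is invisible to divergence-free test fields.

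Stage 2 is hand-waved exactly where the second difficulty sits. The spatially harmonic part of the local pressure has the form $-\partial_t\varpi_n$ with $\varpi_n$ merely bounded in time, so it has no time integrability at all and cannot be ``uniformly controlled on strictly smaller cylinders''; no choice of nonnegative test function makes the flux $\vec\nabla\cdot(p_n^{\mathrm{har}}\vec u_n)$ converge by soft interior estimates. The paper instead corrects the velocity itself by a harmonic vector field, setting $\vec U_n=-\frac1\Delta\vec\nabla\wedge\left(\psi\,(\vec\nabla\wedge\vec u_n)\right)=\vec u_n+\vec H_n$, for which the associated pressure $P_n$ and force $\vec F_n$ are uniformly bounded in $L^{3/2}_{t,x}$ and $L^2_{t,x}$; it proves that the Duchon--Robert defect $M$ is invariant under harmonic corrections, and passes to the limit in $M(\vec U_n)=M(\vec u_n)\ge 0$ using strong $L^3_{t,x}$ convergence of $\vec U_n$ (Rellich again, now legitimately, since $\partial_t\vec U_n$ is bounded in $L^2_tH^{-2}_x$) together with lower semicontinuity of the gradient term via Banach--Steinhaus. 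The conclusion is only that $\vec u$ is \emph{dissipative}, not suitable: the limit pressure need not lie in $L^{3/2}_{t,x}$, so the $\varepsilon$-regularity step must be the Wolf/Kukavica--O'Leary variant of Caffarelli--Kohn--Nirenberg for dissipative solutions rather than the classical theorem you invoke. Your Stage 3 covering argument is standard and fine once that substitution is made, but Stages 1 and 2 as written do not survive the paper's own counterexample.
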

As we shall see, the main tool of the proof is an extension of the Caffarelli--Kohn--Nirenberg theory \cite{CKN} to the case where we have no control on the pressure (i.e.  the case of generalized suitable solutions \cite{WOL} or dissipative solutions \cite{CLM}).
 
\section{Pressure.}
Equations (\ref{WNS})  can classically be rewritten as an equation involving a pressure term. See for instance \cite{WOL}. In the following, we shall only need the pressure inside spherical cylinders  $Q=I\times B$ (where $I$ is an open  interval in  $\mathbb{R}$ and $B$ an open ball  of $\mathbb{R}^3$). In that case, it is very easy to define a pressure $p$ such that 
\begin{equation} \label{PRES}\partial_t\vec u=\Delta\vec u-\vec u\cdot\vec\nabla\vec u-\vec \nabla p+\vec f \quad \text{ in } \mathcal{D}'(Q). \end{equation} 
Indeed, let $Q$, $Q^\#$, and $ Q^*$ be three relatively compact cylinders in $\Omega$ with $\overline{Q}\subset Q^\#$ and $\overline{Q^\#}\subset  Q^*$  and $\psi$ a cut off smooth function supported in $ Q^*$ and equal to $1$ on a neighboorhood of $\overline{Q^\#}$. The function $$p_0=-\frac{1}{\Delta} \left(\sum_{i=1}^3\sum_{j=1}^3\partial_i\partial_j( \psi u_i u_j)\right)$$
 belongs to $L^2_t L^{3/2}_x$ and, on $Q^\#$, the distribution $$\vec T=\partial_t\vec u-\Delta\vec u+\vec u\cdot \vec\nabla\vec u+\vec \nabla p_0-\vec f$$ satisfies 
 $$ \text{ curl }\vec T=0\text{ and } \text{div }\vec T=0.$$
 Moreover, $\vec T_0=\vec T-\partial_t\vec u$ belongs to $L^2_t H^{-2}_x(Q^\#) $. Picking $t_0\in I$, we define $\vec S=\vec u+\displaystyle{\int_{t_0}^t \vec T_0(s,.)\, ds}$. We have $\vec S\in L^\infty_t H^{-2}_x(Q^\#)$. Moreover, we have $\partial_t  \text{ curl }\vec S=0$ and $\partial_t \text{ div }\vec S=0$. Thus, if $\alpha\in\mathcal{D}(I)$ with $\int\alpha\, dt=1$, we find that 
 $$ \vec S_0=\vec S-\int_I \alpha(s)\vec S(s,.)\, ds$$ satisfies
 $$ \partial_t\vec S_0=\vec T, \quad \text{ curl }\vec S_0=0 \text{ and } \text{ div }\vec S_0=0.$$
 In particular, $$\Delta\vec S_0= \vec\nabla(\text{ div }\vec S_0)-\vec\nabla\wedge(\text{ curl }\vec S_0)=0.$$
 Thus, we get that $\vec S_0$ is smooth in the space variable; in particular $\vec S_0\in L^\infty_t W^{1,\infty}_x(Q)$. If $x_0\in B$ and if we define 
 $$\varpi(t,x)=\int_0^1 \vec S_0(t,(1-\theta)x_0+\theta x)\cdot (x-x_0)\, d\theta,$$ 
 we find that $\varpi\in L^\infty_{t,x}(Q)$ and $\vec\nabla\varpi=\vec S_0$.
 Defining $p=p_0-\partial_t\varpi$, we find the equality (\ref{PRES}).
\\

Of course, the pressure may be singular in time (as $\partial_t\varpi$ is only the derivative of a bounded function). We shall comment further on this in Sections \ref{ENER} and  \ref{SERR}. 

\section{Energy balance.}\label{ENER}

This section is devoted to the study of $\partial_t\vert\vec u\vert^2$, as it is the main tool to estimate the partial regularity of $\vec u$. If $\vec u$ and the pressure $p$ were regular, we could write from equality (\ref{PRES})
$$\partial_t\vert\vec u\vert^2=2 \vec u\cdot \partial_t\vec u=2\vec u\cdot \Delta\vec u-2\vec u\cdot(\vec u\cdot\vec\nabla\vec u+\vec\nabla p)+2\vec u\cdot\vec f$$ and rewrite
$$ 2\vec u\cdot\Delta\vec u=\Delta(\vert \vec u\vert^2)-2\vert\vec\nabla\otimes\vec u\vert^2$$
and, since $\text{ div }\vec u=0$,
$$2\vec u\cdot(\vec u\cdot\vec\nabla\vec u+\vec\nabla p)=\text{ div }((\vert \vec u\vert^2+2p)\vec u).$$
This would give the following local energy balance in $Q$
\begin{equation}\label{ENERa}
\partial_t\vert\vec u\vert^2= \Delta(\vert \vec u\vert^2)-2\vert\vec\nabla\otimes\vec u\vert^2-\text{ div }((\vert \vec u\vert^2+2p)\vec u)+2\vec u\cdot\vec f.\end{equation}
However,   local weak solutions (and their  associates pressures) are not regular enough to allow those computations : the problem lies in the fact that the terms $\vec u\cdot(\vec u\cdot\nabla\vec u)$ and $\vec u\cdot\vec\nabla p$ are not well defined in $\mathcal{D}'$. If the pressure is regular enough (for instance, $p\in L^{3/2}_{t,x}(Q)$)  then one first smoothens $\vec u$ with a mollifier $\varphi_\epsilon=\frac{1}{\epsilon^3}\varphi(\frac x\epsilon)$, defining $\vec u_\epsilon=\varphi_\epsilon*\vec u$. One then finds
$$ \partial_t\vert\vec u_\epsilon\vert^2= \Delta(\vert \vec u_\epsilon\vert^2)-2\vert\vec\nabla\otimes\vec u_\epsilon\vert^2-2 \vec u_\epsilon\cdot\varphi_\epsilon*(\vec u.\vec\nabla\vec u)   -2\text{ div }( (p*\varphi_\epsilon)\vec u_\epsilon)+2\vec u_\epsilon\cdot(\varphi_\epsilon*\vec f). $$ 
The limit $\epsilon\rightarrow 0$ gives then
\begin{equation*}
\partial_t\vert\vec u\vert^2= \Delta(\vert \vec u\vert^2)-2\vert\vec\nabla\otimes\vec u\vert^2- 2\lim_{\epsilon\rightarrow 0}\vec u_\epsilon\cdot\varphi_\epsilon*(\vec u\cdot\vec\nabla\vec u)   -2\text{ div } (p\vec u)+2\vec u\cdot\vec f.\end{equation*}
In order to compare this expression with (\ref{ENERa}), we define
$$ M_\epsilon(\vec u)=- \text{ div }(\vert \vec u\vert^2\vec u) +2 \vec u_\epsilon\cdot\varphi_\epsilon*(\vec u\cdot\vec\nabla\vec u) $$
and write
\begin{equation*}
\partial_t\vert\vec u\vert^2= \Delta(\vert \vec u\vert^2)-2\vert\vec\nabla\otimes\vec u\vert^2-\text{ div }((\vert \vec u\vert^2+2p)\vec u)+2\vec u\cdot\vec f-\lim_{\epsilon\rightarrow 0} M_\epsilon(\vec u).
\end{equation*}
However, our assumptions on weak solutions don't allow us to make all those computations, as the pressure we can define on $Q$ has no regularity with respect to the time variable, so that $p\vec u$ is not well defined in $\mathcal{D}'$. Thus, one must smoothens as well $\vec u$ with respect to the time variable, with a mollifier $\psi_\eta(t)=\frac{1}{\eta}\psi(\frac t \eta)$. Defining $\vec u_{\epsilon,\eta}=\psi_\eta*_t\varphi_\epsilon*_x\vec u=\xi_{\eta,\epsilon}*_{t,x}\vec u$, one finds
\begin{eqnarray*}
\partial_t\vert  \vec u_{\epsilon,\eta}\vert^2&=& \Delta(\vert \vec   u_{\epsilon,\eta}\vert^2)-2\vert\vec\nabla\otimes  \vec u_{\epsilon,\eta}\vert^2-2 \vec u_{\epsilon,\eta}\cdot  \xi_{\eta,\epsilon}*(\vec u\cdot \vec\nabla\vec u) \\
& &  -2\text{ div }( (p*\xi_{\eta,\epsilon})\vec u_{\epsilon,\eta})+2  \vec u_{\epsilon,\eta}\cdot(\xi_{\eta,\epsilon}*\vec f). 
\end{eqnarray*}
The limit $\eta\rightarrow 0$ gives then
$$ \partial_t\vert\vec u_\epsilon\vert^2= \Delta(\vert \vec u_\epsilon\vert^2)-2\vert\vec\nabla\otimes\vec u_\epsilon\vert^2-2 \vec u_\epsilon\cdot\varphi_\epsilon*(\vec u\cdot\vec\nabla\vec u)   -2\lim_{\eta\rightarrow 0}\text{ div }( (p*\xi_{\eta,\epsilon})\vec u_{\epsilon,\eta})+2\vec u_\epsilon\cdot(\varphi_\epsilon*\vec f). $$
The limit $\epsilon\rightarrow 0$ gives finally
\begin{equation}\label{ENERb}
\begin{split}
\partial_t\vert\vec u\vert^2&= \Delta(\vert \vec u\vert^2)-2\vert\vec\nabla\otimes\vec u\vert^2\\
&- 2\lim_{\epsilon\rightarrow 0} \left(\vec u_\epsilon\cdot\varphi_\epsilon*(\vec u\cdot\vec\nabla\vec u)+\lim_{\eta\rightarrow 0}\text{ div }( (p*\xi_{\eta,\epsilon})\vec u_{\epsilon,\eta})\right)+2\vec u\cdot\vec f.
\end{split}
\end{equation}\\

In order to circumvene the problems of lack of regularity for the pressure, we introduce the notion of harmonic correction :
  
 \begin{definition}[\bf Harmonic corrections]   Let $\Omega$ be a   domain in $\mathbb{R}\times\mathbb{R}^3$,  $\vec f\in L^2_{\rm loc}(\Omega)$ a divergence-free time-dependent vector field and    $\vec u$    a local weak solution of the Navier--Stokes equations on $\Omega$ (associated to the force $\vec f$). A \emph{harmonic correction} $\vec H$ on a cylinder $Q\subset\subset\Omega$ is a vector field such that
 \begin{itemize}
 \item $\text{ div } \vec H=0$ and $\Delta\vec H=0$,
 \item $\vec H\in  L^\infty_{t,x}(Q)$ and $\partial_i\vec H\in  L^\infty_{t,x}(Q)$ for $i=1,2,3$,
 \item there exists $\vec F\in L^2_{t,x}(Q)$ and $P\in L^{3/2}_{t,x}(Q)$ such that the vector field $\vec U=\vec u+\vec H$ satisfies 
 \begin{equation*} 
 \partial_t\vec U=\Delta \vec U-\vec U\cdot\vec\nabla\vec U-\vec\nabla P+\vec F.
 \end{equation*}
 \end{itemize}
 \end{definition}
 In the literature, one can find at least two such harmonic corrections for local weak solutions :
 \begin{lemma}  \label{CORREC}
  Let $\Omega$ be a   domain in $\mathbb{R}\times\mathbb{R}^3$,  $\vec f\in L^2_{\rm loc}(\Omega)$ a divergence-free time-dependent vector field and    $\vec u$    a local weak solution of the Navier--Stokes equations on $\Omega$ (associated to the force $\vec f$). Let $Q$ be a spherical cylinder in $\Omega$. Then:
\begin{itemize}
\item[A)]the decomposition of the pressure $p$ as $p=p_0-\partial_t\varpi$ described in Section \ref{LOCA} provides a harmonic correction $\vec H=-\vec\nabla\varpi$ of $\vec u$ on $Q$,
\item[B)]Let $\psi(t,x)=\alpha(t)\beta(x)$ be a smooth cut-off function supported by a   cylinder $Q^*\subset \subset \Omega$ and equal to $1$ on a neighborhood of $Q$. Then $\vec U=-\frac{1}{\Delta}\vec\nabla\wedge(\psi\vec\nabla \vec u)$ is such that $\vec H=\vec U-\vec u$ is a harmonic correction of $\vec u$ on $Q$.
\end{itemize}
\end{lemma}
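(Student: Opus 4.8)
The plan is to verify, in each case, the three defining properties of a harmonic correction: that $\vec H$ is divergence-free and space-harmonic on $Q$, that $\vec H$ and its spatial derivatives are bounded, and that $\vec U=\vec u+\vec H$ solves the forced system with force in $L^2_{t,x}$ and pressure in $L^{3/2}_{t,x}$. For part A) everything is read off the construction of Section \ref{LOCA}: with $\vec H=-\vec\nabla\varpi=-\vec S_0$ one has $\operatorname{div}\vec H=-\operatorname{div}\vec S_0=0$ and $\Delta\vec H=-\Delta\vec S_0=0$, both given by the construction, while $\vec S_0\in L^\infty_tW^{1,\infty}_x(Q)$ yields the bounds on $\vec H$ and $\partial_i\vec H$. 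For the equation I would substitute $\vec u=\vec U-\vec H$ into (\ref{PRES}) and use $\Delta\vec H=0$ to get $\Delta\vec u=\Delta\vec U$; the decisive cancellation is that the singular-in-time gradient in $-\vec\nabla p=-\vec\nabla p_0+\vec\nabla\partial_t\varpi$ is exactly matched by $\partial_t\vec H=-\vec\nabla\partial_t\varpi$, leaving the regular pressure $P=p_0$. The leftover terms $\vec U\cdot\vec\nabla\vec H+\vec H\cdot\vec\nabla\vec U-\vec H\cdot\vec\nabla\vec H$, together with $\vec f$, form the force $\vec F$; each lies in $L^2_{t,x}(Q)$ because $\vec u\in L^\infty_tL^2_x\cap L^2_tH^1_x$ and $\vec H,\partial_i\vec H\in L^\infty_{t,x}$, and $P=p_0\in L^2_tL^{3/2}_x\subset L^{3/2}_{t,x}$.

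For part B) write $\vec\omega=\vec\nabla\wedge\vec u$, so that $\vec U=-\frac1\Delta\vec\nabla\wedge(\psi\vec\omega)$. Being a curl, $\vec U$ is divergence-free, hence so is $\vec H=\vec U-\vec u$; and $\Delta\vec U=-\vec\nabla\wedge(\psi\vec\omega)$, which on the open set where $\psi\equiv1$ equals $-\vec\nabla\wedge\vec\omega=\Delta\vec u$, so $\Delta\vec H=0$ on a neighbourhood of $\overline Q$. For the $L^\infty$ bounds I would exploit precisely this harmonicity: interior elliptic estimates bound $\vec H$ and $\partial_i\vec H$ pointwise on $Q$ by $\|\vec H(t,\cdot)\|_{L^2}$ over that neighbourhood, and the latter is controlled uniformly in $t$ because $\vec u\in L^\infty_tL^2_x$ and, the operator $\frac1\Delta\vec\nabla\wedge(\psi\,\cdot)$ being of order $-1$, $\|\vec U(t,\cdot)\|_{L^2}\le C\|\psi\vec\omega(t,\cdot)\|_{H^{-1}}\le C'\|\vec u(t,\cdot)\|_{L^2}$.

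The real work is the energy equation in part B), and I would build on part A). Both $\vec U_A=\vec u+\vec H_A$ (from A) and $\vec U_B=\vec u+\vec H_B$ differ only by $\vec K:=\vec H_B-\vec H_A$, which is divergence-free, space-harmonic on $Q$, and bounded together with its spatial derivatives. Substituting $\vec U_A=\vec U_B-\vec K$ into the equation already established in A) turns everything into the desired shape for $\vec U_B$ except for the single term $\partial_t\vec K$: the quadratic corrections $\vec U_B\cdot\vec\nabla\vec K+\vec K\cdot\vec\nabla\vec U_B-\vec K\cdot\vec\nabla\vec K$ and $\vec F_A$ all lie in $L^2_{t,x}$, and $-\vec\nabla p_0$ is the old pressure. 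Thus the statement reduces to showing that $\partial_t\vec K$ splits as a spatial gradient (to be absorbed into $P$) plus an $L^2_{t,x}(Q)$ field (to be absorbed into $\vec F$). This is the main obstacle, and it is exactly where the localisation by $\psi$ is essential.

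To settle it I would compute $\operatorname{curl}\vec K$ on $Q$: since $\vec H_A$ is a gradient, $\operatorname{curl}\vec U_A=\vec\omega$, whereas $\operatorname{curl}\vec U_B=\psi\vec\omega-\frac1\Delta\vec\nabla\operatorname{div}(\psi\vec\omega)$, so on $Q$ one finds $\operatorname{curl}\vec K=-\vec\nabla h$ with $h=\frac1\Delta(\vec\nabla\psi\cdot\vec\omega)$. The crucial feature is that the source $\vec\nabla\psi\cdot\vec\omega$ is supported in $\{\vec\nabla\psi\neq0\}$, which is disjoint from $Q$; hence for $x\in Q$ the quantity $\partial_th(t,x)$ is an integral of $\partial_t(\vec\nabla\psi\cdot\vec\omega)$ against a kernel that is smooth near the support. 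Plugging in the vorticity equation $\partial_t\vec\omega=\Delta\vec\omega-\vec\nabla\wedge(\vec u\cdot\vec\nabla\vec u)+\vec\nabla\wedge\vec f$ and integrating by parts to move every derivative onto the smooth kernel, the pressure disappears under the curl and all contributions are controlled by $\|\vec u(t,\cdot)\|_{L^2}^2$, $\|\vec u(t,\cdot)\|_{L^2}$ and $\|\vec f(t,\cdot)\|_{L^2}$; this gives $\vec\nabla\partial_th\in L^2_{t,x}(Q)$, that is $\operatorname{curl}\partial_t\vec K\in L^2_{t,x}(Q)$. Inverting the curl on $Q$ then extracts the $L^2$ part of $\partial_t\vec K$ and leaves a gradient, and the resulting pressure is identified through $-\Delta P=\operatorname{div}(\vec U_B\cdot\vec\nabla\vec U_B-\vec F)$, which lies in $L^{3/2}_{t,x}(Q)$ by Calderón--Zygmund estimates applied to $|\vec U_B|^2\in L^{5/3}_{t,x}$ (note $\vec U_B\in L^\infty_tL^2_x\cap L^2_tH^1_x\subset L^{10/3}_{t,x}$). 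I expect the separation-of-supports integration by parts, combined with the disappearance of the pressure under the curl, to be the delicate point of the whole argument.
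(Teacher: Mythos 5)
Your part A) is correct and is in substance the paper's own argument: the paper writes the nonlinearity in rotational form, obtains the same cancellation of $\vec\nabla\partial_t\varpi$ against $\partial_t\vec H$, and the commutator terms you list ($\vec U\cdot\vec\nabla\vec H+\vec H\cdot\vec\nabla\vec U-\vec H\cdot\vec\nabla\vec H$) appear there in the guise of $(\vec\nabla\wedge\vec u)\wedge\vec\nabla\varpi$. The only substantive difference is that the paper further splits this $L^2$ term by the Leray projection so as to make $\vec F$ divergence-free; the definition of a harmonic correction does not require this, but Kukavica's theorem, invoked later, does, so you should add that one line.

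For part B) the paper gives no proof (it only cites \cite{CLM,LEM}), and much of your attempt is sound: the divergence-freeness and spatial harmonicity of $\vec H$ on $Q$, the $L^\infty_t W^{1,\infty}_x$ bounds via interior estimates for harmonic functions, and the identity $\vec\nabla\wedge\vec U_B=\psi\vec\omega-\vec\nabla h$ with $h=\frac1\Delta(\vec\nabla\psi\cdot\vec\omega)$ are all correct. The gap is in the final step. Knowing that $\partial_t\vec K$ is divergence-free and that $\vec\nabla\wedge\partial_t\vec K\in L^2_{t,x}(Q)$ determines $\partial_t\vec K$ only up to a field that is curl-free, divergence-free and harmonic in $x$ --- and such a field can carry an arbitrary, non-integrable singularity in time. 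Your own part A) exhibits exactly such a field: $\partial_t\vec H_A=-\vec\nabla\partial_t\varpi$ is curl-free, divergence-free and harmonic in $x$, yet is only the time derivative of a bounded function. For the same reason, ``identifying $P$ through $-\Delta P=\text{div}(\vec U_B\cdot\vec\nabla\vec U_B-\vec F)$'' cannot yield $P\in L^{3/2}_{t,x}$: the elliptic equation leaves the harmonic-in-$x$ part of $P$ entirely free, and that is precisely where the singular term $\partial_t\varpi$ hides (cf.\ Serrin's example, where $\Delta p$ is smooth but $p$ contains $\dot\alpha\,\psi$). As written, your argument would equally ``prove'' that the original pressure $p$ is in $L^{3/2}_{t,x}$, which is false in general.

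To close the gap you must control $\partial_t\vec K$ itself, not merely its curl. Using (\ref{PRES}) one has $\partial_t\vec K=\partial_t\vec U_B-\partial_t\vec u+\vec\nabla\partial_t\varpi=\partial_t\vec U_B-\Delta\vec u+\vec u\cdot\vec\nabla\vec u+\vec\nabla p_0-\vec f$, so the singular gradient cancels identically; moreover $\partial_t\vec U_B=-\frac1\Delta\vec\nabla\wedge\partial_t(\psi\vec\omega)$ is driven by the vorticity equation, in which the pressure is absent. Hence every term of $\partial_t\vec K$ lies in $L^{3/2}_tH^{-N}_x(Q)$ for some $N$ (the worst term being $\vec u\cdot\vec\nabla\vec u=\text{div}(\vec u\otimes\vec u)$ with $\vec u\otimes\vec u\in L^{5/3}_{t,x}$). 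Only with this quantitative time-integrability of the whole field in hand can you assert that the curl-free remainder $\partial_t\vec K-\vec W$ is the gradient of a potential $q$ whose harmonic-in-$x$ part is controlled (by elliptic regularity from the $L^{3/2}_tH^{-N}_x$ bound), and hence that $P=P_A-q\in L^{3/2}_{t,x}(Q')$ on a slightly smaller cylinder. With that addition your reduction of B) to A) goes through.
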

\begin{proof} The case of $\vec H=-\vec\nabla\varpi$ has been discussed   by Wolf  \cite{WOL}.  For $\vec U=\vec u-\vec\nabla \varpi$, we have $\vec\nabla\wedge\vec U=\vec\nabla\wedge\vec  u$ and $\Delta\vec U=\Delta\vec u$, so that
\begin{equation*}\begin{split} \partial_t\vec U -\Delta\vec U+\vec U\cdot \vec\nabla\vec U&=\partial_t\vec u-\partial_t\vec\nabla\varpi-\Delta\vec u+(\vec \nabla\wedge\vec u)\wedge(\vec u-\vec\nabla\varpi)+\vec\nabla(\frac{\vert\vec U\vert^2}2)\\&= \vec\nabla(\frac{\vert\vec U\vert^2}2-\frac{\vert\vec u\vert^2}2-p_0)
+\vec f - (\vec \nabla\wedge\vec u)\wedge \vec\nabla\varpi\end{split}\end{equation*}
We may  then decompose $(\vec \nabla\wedge\vec u)\wedge \vec\nabla\varpi\in L^2_tL^2_x(Q)$ into $\vec f_1+\vec\nabla p_1$ with $\vec f_1\in L^2_t L^2_x$ and $\text{ div }\vec f_1=0$ and $p_1\in L^2_t L^6_x(Q)$ (for instance, by extending   $(\vec \nabla\wedge\vec u)\wedge \vec\nabla\varpi$ by $0$ outside $Q$ and then using the Leray projection operator). We thus find $$P= p_0+ \frac{\vert\vec u\vert^2}2 -  \frac{\vert\vec U\vert^2}2+p_1 \text{ and }  \vec F=\vec f-\vec f_1.$$

The case of $\vec U=-\frac{1}{\Delta}\vec\nabla\wedge(\psi\vec\nabla \vec u)$ has been discussed by Chamorro, Lemari\'e-Rieusset and Mayoufi in \cite{CLM, LEM}. It is worth noticing that the pressure $P$ they obtain belongs to $L^2_t L^q_x(Q)$ for every $q<3/2$.\\

Note that, in both cases, even if $\vec f$ is assumed to be more regular, we cannot get a better regularity for $\vec F$ than $L^2_{t} L^2_{x}$, because of the contribution of $(\vec\nabla\wedge\vec u)\wedge \vec H$ to the force.
 \end{proof}
 
 An important result of Chamorro, Lemari\'e-Rieusset and Mayoufi is the following one  \cite{CLM, LEM} : 
 \begin{theorem}[\bf Energy balance]
 Let $\Omega$ be a   domain in $\mathbb{R}\times\mathbb{R}^3$,  $\vec f\in L^2_{\rm loc}(\Omega)$ a divergence-free time-dependent vector field and    $\vec u$    a local weak solution of the Navier--Stokes equations on $\Omega$ (associated to the force $\vec f$).  Let $Q$ be a spherical cylinder in $\Omega$ and $p$ the pressure associated to $\vec u$ on $Q$. Then:
 \begin{itemize}
\item[A)] The quantities
$$ M(\vec u)=   \lim_{\epsilon\rightarrow 0} \left(- \text{ \rm div }(\vert \vec u\vert^2\vec u) +2 \vec u_\epsilon\cdot\varphi_\epsilon*(\vec u\cdot\vec\nabla\vec u)\right) $$
and
$$ << \text{\rm div }(p\vec u) >> =\lim_{\epsilon\rightarrow 0}  \lim_{\eta\rightarrow 0}\text{\rm div }( (p*\xi_{\eta,\epsilon})\vec u_{\epsilon,\eta})$$ are well defined in $\mathcal{D}'(Q)$. 
\item[B)] We have the energy balance on $Q$ :
$$ \partial_t\vert\vec u\vert^2= \Delta(\vert \vec u\vert^2)-2\vert\vec\nabla\otimes\vec u\vert^2-  \text{ \rm div }(\vert \vec u\vert^2\vec u)  - 2 << \text{\rm div }(p\vec u) >>   +2\vec u\cdot\vec f-M(\vec u).$$ 
\item[C)] $M(\vec u)$ can be computed as a defect of regularity. More precisely, we have, for
 $$ A_{k,\epsilon}(\vec u)=\frac{ (u_k(t,x-y)-u_k(t,x))(\vec u(t,x-y)-\vec u(t,x)) \cdot\int \varphi_\epsilon(z) (\vec u(t,x-z)-\vec u(t,x))\, dz}\epsilon $$
 and
 $$ B_{k,\epsilon}(\vec u)= \frac{ (u_k(t,x-y)-u_k(t,x)) \vert \vec u(t,x-y)-\vec u(t,x) \vert^2  }\epsilon ,$$ 
 the identity
 \begin{equation}\label{EDR} M_\epsilon(\vec u)= \sum_{k=1}^3  \int \frac 1 {\epsilon^3} \partial_k\varphi(\frac y \epsilon)   (2 A_{k,\epsilon}(\vec u)-B_{k,\epsilon}(\vec u))\, dy-C_\epsilon(\vec u) 
 \end{equation} 
 where $\lim_{\epsilon\rightarrow 0} C_\epsilon(\vec u)=0$ in $\mathcal{D}'(Q)$.
\item[D)]  If $\vec U=\vec u+\vec H$ where $\vec H$ is a harmonic correction of $\vec u$, then $M(\vec U)=M(\vec u)$.
\end{itemize}
\end{theorem}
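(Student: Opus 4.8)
The plan is to treat the defect term $M(\vec u)$ as the analytic core and to deduce everything else from it. The starting point is the exact identity for the doubly-mollified field $\vec u_{\epsilon,\eta}$ displayed above, which is legitimate because $\vec u_{\epsilon,\eta}$ is smooth and $p*\xi_{\eta,\epsilon}$ is a genuine function. The delicate point in \eqref{ENERb} is that, a priori, one controls only the \emph{sum} of the two contributions; to separate it into $M(\vec u)$ and $<<\text{div}(p\vec u)>>$ one must show that at least one of them converges on its own. Since $M_\epsilon(\vec u)$ depends on $\vec u$ alone and not on the pressure, I would first establish the convergence of $M_\epsilon(\vec u)$ (settling part A for $M$ and part C at once), and then obtain $<<\text{div}(p\vec u)>>$ by difference.

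For part C I would run the Duchon--Robert style computation. Writing the nonlinearity as $\vec u\cdot\vec\nabla\vec u=\text{div}(\vec u\otimes\vec u)$ (using $\text{div }\vec u=0$) gives $\varphi_\epsilon*(\vec u\cdot\vec\nabla\vec u)=\text{div}(\varphi_\epsilon*(\vec u\otimes\vec u))$, so both terms defining $M_\epsilon(\vec u)$ are expressed through the kernel $\vec\nabla\varphi_\epsilon$ acting on quadratic expressions in $\vec u$. I would then insert the finite differences $\delta_y\vec u(x)=\vec u(x-y)-\vec u(x)$ everywhere, using $\int\varphi_\epsilon=1$ to write $\vec u_\epsilon-\vec u=\int\varphi_\epsilon(z)\,\delta_z\vec u\,dz$; after regrouping, the genuinely trilinear terms assemble into $\sum_k\int\tfrac1{\epsilon^3}\partial_k\varphi(y/\epsilon)\,(2A_{k,\epsilon}-B_{k,\epsilon})\,dy$ exactly as in \eqref{EDR}, while the leftover terms — those in which one factor keeps the undifferenced value $\vec u(x)$ — are collected into $C_\epsilon(\vec u)$. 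The estimate $\|\delta_y\vec u\|_{L^2}\le C|y|\,\|\vec\nabla\vec u\|_{L^2}$ together with $\vec u\in L^{10/3}_{t,x}(Q)$ (from $L^\infty_tL^2_x\cap L^2_tH^1_x$) gives $C_\epsilon(\vec u)\to0$ in $\mathcal D'(Q)$. I expect the \textbf{main obstacle} to be the convergence of the remaining finite-difference integral, that is, the very existence of $M(\vec u)$: here the parabolic regularity $\vec u\in L^\infty_tL^2_x\cap L^2_tH^1_x$ is essential, controlling $\delta_y\vec u$ by $|y|$ in $L^2_{t,x}$ while retaining the $L^{10/3}$ scaling, so that the trilinear integrand survives the singular weight $\partial_k\varphi_\epsilon$ only as a convergent limit.

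Once $M(\vec u)$ is known to exist, I would return to the exact identity for $\vec u_{\epsilon,\eta}$ and let $\eta\to0$ at fixed $\epsilon$: every term but the pressure one converges, so $\lim_{\eta\to0}\text{div}((p*\xi_{\eta,\epsilon})\vec u_{\epsilon,\eta})$ exists. Here the pressure decomposition $p=p_0-\partial_t\varpi$ is used: the part $p_0\in L^2_tL^{3/2}_x$ convolves harmlessly, while the time-singular part is handled by transferring $\partial_t$ onto the smooth factor $\vec u_{\epsilon,\eta}$ and invoking equation \eqref{PRES} for $\partial_t\vec u$. Letting then $\epsilon\to0$, the convergence of $M_\epsilon(\vec u)$ forces that of $\vec u_\epsilon\cdot\varphi_\epsilon*(\vec u\cdot\vec\nabla\vec u)$, hence of the pressure term, which defines $<<\text{div}(p\vec u)>>$ and completes part A. Using $2\vec u_\epsilon\cdot\varphi_\epsilon*(\vec u\cdot\vec\nabla\vec u)=M_\epsilon(\vec u)+\text{div}(|\vec u|^2\vec u)$ and rearranging the limit in \eqref{ENERb} then yields the balance of part B.

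For part D I would exploit that the representation \eqref{EDR} only sees finite differences of the field. Writing $\vec U=\vec u+\vec H$ and expanding $A_{k,\epsilon}(\vec U)$, $B_{k,\epsilon}(\vec U)$ multilinearly, the purely-$\vec u$ contributions reproduce $M(\vec u)$, while every contribution containing at least one factor $\delta_y\vec H$ is negligible. Indeed $\vec\nabla\vec H\in L^\infty_{t,x}(Q)$ gives $\delta_y\vec H=O(|y|)$, and for a radial mollifier the harmonicity of $\vec H$ even yields $\varphi_\epsilon*\vec H=\vec H$ by the mean-value property, so the $z$-averaged difference $\vec H_\epsilon-\vec H$ drops out. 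A power count then bounds each mixed term by $O(\epsilon)$ in $\mathcal D'(Q)$: the factor $|y|\le C\epsilon$ coming from $\delta_y\vec H$ beats the weight $\partial_k\varphi_\epsilon$, the remaining factors being controlled by $\|\delta_y\vec u\|_{L^2}\le C|y|$ and the $L^{10/3}$ bound. Hence $M(\vec U)=M(\vec u)$.
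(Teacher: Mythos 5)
Your parts C and D are essentially sound and follow the same lines as the paper: part C is the Duchon--Robert computation, and part D rests on the observation that any trilinear finite-difference term containing a factor $\delta_y\vec H$ with $\vec H,\vec\nabla\vec H\in L^\infty_{t,x}(Q)$ gains an extra power of $|y|$ and therefore vanishes against the weight $\partial_k\varphi_\epsilon$ (this is exactly the paper's remark about products $\delta_yw_1\,\delta_yw_2\,\delta_yw_3$ with $w_3\in L^\infty_t{\rm Lip}_x$). The genuine gap is in part A, at the very step you yourself flag as ``the main obstacle'': the existence of $\lim_{\epsilon\rightarrow 0}M_\epsilon(\vec u)$ does \emph{not} follow from the bounds $\|\delta_y\vec u\|_{L^2}\leq C|y|\,\|\vec\nabla\vec u\|_{L^2}$ together with $\vec u\in L^{10/3}_{t,x}(Q)$. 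Those estimates only show that the family of finite-difference integrals in (\ref{EDR}) is \emph{bounded} uniformly in $\epsilon$ as distributions; boundedness yields weak-$*$ convergent subsequences but not a well-defined limit. For a generic divergence-free field in $L^\infty_tL^2_x\cap L^2_tH^1_x$ the limit need not exist: in Duchon and Robert's work the convergence is obtained by identifying the expression with the remaining terms of the mollified energy balance, and that identification requires a pressure with some integrability in time --- precisely what is unavailable here, since $p=p_0-\partial_t\varpi$ may be singular in $t$. So your logical order (first prove that $M(\vec u)$ exists intrinsically, then recover the pressure bracket by difference) cannot be carried out as stated.

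The paper reverses the order, and this reversal is the whole point of the harmonic correction. One first shows that $M(\vec U)$ exists for $\vec U=\vec u+\vec H$: since $\vec U$ solves the equations with a pressure $P\in L^{3/2}_{t,x}(Q)$ and a force $\vec F\in L^2_{t,x}(Q)$, the classical single (spatial) mollification argument applies to $\vec U$, every other term of its energy balance converges, and hence $\lim_{\epsilon\rightarrow 0}M_\epsilon(\vec U)$ is well defined. The Lipschitz cancellation of part D then gives $M_\epsilon(\vec U)-M_\epsilon(\vec u)\rightarrow 0$, so $M(\vec u)$ exists and equals $M(\vec U)$. Finally, identity (\ref{ENERb}) shows that the combined limit $\lim_{\epsilon\rightarrow 0}\bigl(M_\epsilon(\vec u)+2\lim_{\eta\rightarrow 0}\text{div}((p*\xi_{\eta,\epsilon})\vec u_{\epsilon,\eta})\bigr)$ exists, so the bracket $<<\text{div}(p\vec u)>>$ is obtained by difference and part B follows. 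If you wish to keep your architecture, you must replace the direct convergence claim for $M_\epsilon(\vec u)$ by this detour through $\vec U$; the rest of your write-up can stand.
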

\begin{proof} The key tool is identity (\ref{EDR}) which has been described by Duchon and Robert \cite{DUC} for any divergence-free vector field $\vec u$ in $L^\infty_tL^2_x(Q)\cap L^2_tH^1_x(Q)$.  Let us remark that if $w_1$ and $w_2$ belong to $L^\infty_tL^2_x(Q)\cap L^2_tH^1_x(Q)$ and $w_3$ to $L^\infty_t {\rm Lip}_x(Q)$ then we have obviously
 $$ \lim_{\epsilon\rightarrow 0} \int   \frac 1 {\epsilon^3} \partial_k\varphi(\frac y \epsilon)   \frac{(w_1(x-y)-w_1(x))(w_2(x-y)-w_2(x))(w_3(x-y)-w_3(x))}\epsilon\, dy=0.$$ 
 Thus, if   $\vec H$ is a harmonic correction of $\vec u$, we have $\lim_{\epsilon\rightarrow 0} M_\epsilon(\vec u+\vec H)-M_\epsilon(\vec u)=0$. Since the limits $  \lim_{\epsilon\rightarrow 0} M_\epsilon(\vec u+\vec H)$ and $\lim_{\epsilon\rightarrow 0} (M_\epsilon(\vec u) +2 \lim_{\eta\rightarrow 0}\text{\rm div }( (p*\xi_{\eta,\epsilon})\vec u_{\epsilon,\eta}))$ are well defined in $\mathcal{D}'(Q)$, we find that $M(\vec u)$ and $<<\text{ \rm div }(p\vec u) >>$ are well defined and that $M(\vec u)=M(\vec u+\vec H)$.
 \end{proof}
 
 Of course, if $\vec u$ is regular enough, we have $M(\vec u)=0$. Due to formula  (\ref{EDR}), Duchon and Robert \cite{DUC} could see that when $\vec u$ belongs locally to $L^3_t (B^{1/3}_{3,q})_x$ with $q<+\infty$, then $M(\vec u)=0$. This is the case when the classical criterion $\vec u\in L^4_{t,x}(\Omega)$ is fulfilled, since $L^4_tL^4_x\cap L^2_t H^1_x\subset L^3_t(B^{1/3}_{3,3})_x$. In particular, the support of the distribution $M(\vec u)$ is a subset of the set $\Sigma(\vec u)$ of singular points.
   \section{Dissipativity and partial regularity.}
The best result we know about (partial) regularity of weak solutions has been given in 1982 by Caffarelli, Kohn and Nirenberg \cite{CKN, LAD}. Their result is based on the notion of suitable solutions (due to Scheffer \cite{SCH}):
 
 \begin{definition}[\bf Suitable solutions] Let $\vec u$ be a local weak solutions of the Navier--Stokes solutions on a domain $\Omega\subset \mathbb{R}\times\mathbb{R}^3$. Then $\vec u$ is suitable if if satisfies the following two conditions :
 \begin{itemize}
 \item the pressure $p$ is locally in $L^{3/2}_{t,x}$,
 \item $M(\vec u)\geq 0$ (i.e. $M(\vec u)$ is a non-negative locally finite Borel measure).
 \end{itemize}
 \end{definition}
Let us define now the parabolic metric $\rho((t,x),(s,y))=\max(\sqrt{\vert t-s\vert},\vert x-y\vert^2)$ and the parabolic cylinders $Q_r(t,x)=\{(s,y): \ \rho((t,x),(s,y))<r\}$.
 
 \begin{theorem}[\bf Caffarelli, Kohn and Nirenberg's regularity theorem]  Let $\Omega$ be a   domain in $\mathbb{R}\times\mathbb{R}^3$,  $\vec f\in L^2_{\rm loc}(\Omega)$ a divergence-free time-dependent vector field and    $\vec u$    a local weak solution of the Navier--Stokes equations on $\Omega$ (associated to the force $\vec f$). Assume that moreover
 \begin{itemize}
 \item $\vec u$ is suitable,
 \item the force $\vec f$ is regular : $\vec f$ belongs locally to $L^2_tH^1_x$,
 \end{itemize} Then:
 \begin{itemize}
 \item if $(t,x)\notin \Sigma(\vec u)$, there exists a neighborhood of $(t,x)$ on which $\vec u$ is H\"olderian (with respect to the parabolic metric $\rho$) and we have
 $$ \lim_{r\rightarrow 0} \frac{1}{r}\iint_{Q_r(t,x)} \vert \vec\nabla\otimes\vec u\vert^2\, ds\, dy =0.$$
 \item if $(t,x)\in  \Sigma(\vec u)$,then
 $$ \limsup_{r\rightarrow 0} \frac{1}{r}\iint_{Q_r(t,x)} \vert \vec\nabla\otimes\vec u\vert^2\, ds\, dy >\epsilon^*,$$ where $\epsilon^*$ is a positive constant (which doesn't depend on $\vec u$, $\vec f$ nor $\Omega$).
 \end{itemize}
 \end{theorem}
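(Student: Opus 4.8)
The plan is to feed the suitability hypothesis into the energy balance of the previous theorem and then run the Caffarelli--Kohn--Nirenberg $\epsilon$-regularity scheme. Since $\vec u$ is suitable, the pressure $p$ lies locally in $L^{3/2}_{t,x}$; combined with $\vec u\in L^{10/3}_{t,x,\mathrm{loc}}\subset L^3_{t,x,\mathrm{loc}}$ (by interpolation between $L^\infty_tL^2_x$ and $L^2_tL^6_x$) this gives $p\vec u\in L^1_{t,x,\mathrm{loc}}$, so the double limit $<<\text{div}(p\vec u)>>$ collapses to the genuine distribution $\text{div}(p\vec u)$ and no harmonic correction is needed here. The condition $M(\vec u)\geq 0$ then turns the energy balance into the local energy inequality
\[ \partial_t\vert\vec u\vert^2\leq \Delta(\vert\vec u\vert^2)-2\vert\vec\nabla\otimes\vec u\vert^2-\text{div}((\vert\vec u\vert^2+2p)\vec u)+2\vec u\cdot\vec f, \]
understood in $\mathcal{D}'(Q)$, the difference of the two sides being a non-negative measure. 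This is the single structural ingredient that drives the regularity machine.

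The regular direction is elementary. If $(t,x)\in R(\vec u)$ then $\vec u\in L^\infty_{t,x}$ on a neighborhood, so $G(r)$ and $P(r)$ (defined below) are small for small $r$; the $\epsilon$-criterion then yields H\"olderianity, and parabolic bootstrapping of the Stokes system (treating the nonlinearity as a known forcing) makes $\vec\nabla\otimes\vec u$ bounded near $(t,x)$. Since $r^{-1}\vert Q_r(t,x)\vert\to 0$, this gives $\frac1r\iint_{Q_r(t,x)}\vert\vec\nabla\otimes\vec u\vert^2\to 0$. The whole content is therefore the contrapositive of the singular direction, i.e. the $\epsilon$-regularity statement: there is a universal $\epsilon^*>0$ such that if
\[ \limsup_{r\to 0}\frac1r\iint_{Q_r(z_0)}\vert\vec\nabla\otimes\vec u\vert^2\,ds\,dy\leq\epsilon^* \]
at $z_0=(t,x)$, then $z_0\in R(\vec u)$.

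I would prove this by the classical two-step reduction. First, a \emph{primary criterion}: there is $\epsilon_1>0$ so that if $\frac1{r^2}\iint_{Q_r(z_0)}(\vert\vec u\vert^3+\vert p\vert^{3/2})<\epsilon_1$ for one radius, then $\vec u$ is H\"older near $z_0$. This is established by introducing the scale-invariant quantities $A(r)=\sup_s\frac1r\int_{B_r(x)}\vert\vec u(s,y)\vert^2\,dy$, $\delta(r)=\frac1r\iint_{Q_r(z_0)}\vert\vec\nabla\otimes\vec u\vert^2$, $G(r)=\frac1{r^2}\iint_{Q_r(z_0)}\vert\vec u\vert^3$ and $P(r)=\frac1{r^2}\iint_{Q_r(z_0)}\vert p\vert^{3/2}$, testing the local energy inequality against a cutoff to bound $A(r/2)+\delta(r/2)$ by combinations of $G$ and $P$, controlling $G$ by $A$ and $\delta$ through parabolic Gagliardo--Nirenberg embeddings, and controlling $P$ through the representation $p=-\frac1\Delta\sum_{i,j}\partial_i\partial_j(u_iu_j)$ together with Calder\'on--Zygmund bounds. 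Closing these inequalities forces a geometric decay of the combined excess across a geometric sequence of radii, whence local boundedness and H\"older continuity by a Morrey--Campanato argument. Second, I would derive the stated criterion from the primary one: smallness of $\delta(r)$ controls, via Poincar\'e on each time slice (subtracting the spatial mean $\bar u_r(s)$) and the pressure estimate, the smallness of $G$ and $P$ at a comparable scale, so $\limsup_r\delta(r)\leq\epsilon^*$ with $\epsilon^*$ small enough produces a radius where the primary smallness holds.

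The principal obstacle is the pressure. Even under the favorable hypothesis $p\in L^{3/2}_{t,x}$, the quantity $P(r)$ at scale $r$ cannot be read off from local data alone: on a cylinder $p$ splits as $p_{\mathrm{loc}}+p_{\mathrm{harm}}$ with $p_{\mathrm{harm}}$ harmonic in space, whose contribution at scale $r$ must be recovered from a larger cylinder through interior estimates for harmonic functions. Verifying that this far-field feedback enters with a favorable power of $r$, so that the iteration actually closes, is the delicate point, and is where the bulk of the work lies; once the pressure decomposition and its scaling are made precise the energy-inequality iteration and the Morrey--Campanato conclusion are routine. The constant $\epsilon^*$ is the explicit small threshold emerging from closing the iteration; as it is built only from universal Sobolev, Poincar\'e and Calder\'on--Zygmund constants, it depends neither on $\vec u$, nor $\vec f$, nor $\Omega$, as claimed.
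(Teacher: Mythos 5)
The paper gives no proof of this statement: it is quoted as the classical 1982 result and attributed to \cite{CKN} (see also \cite{LAD}), and the partial-regularity machinery the paper actually runs itself (in the proof of Wolf's theorem) is packaged differently, namely as Kukavica's Morrey-space $\epsilon$-criterion followed by O'Leary's Serrin-type bootstrap. Your sketch is instead the original Caffarelli--Kohn--Nirenberg two-proposition scheme, and it is structurally sound: the observation that suitability ($p\in L^{3/2}_{t,x,\mathrm{loc}}$ together with $\vec u\in L^{3}_{t,x,\mathrm{loc}}$) collapses the regularized term $<< \text{\rm div }(p\vec u) >>$ to the genuine distribution $\text{\rm div}(p\vec u)$ and turns the energy balance into the local energy inequality is exactly the right entry point, and the reduction chain (primary criterion on $G$ and $P$ at one scale implies H\"older regularity; $\limsup_{r\to 0}\delta(r)\le\epsilon^*$ implies the primary criterion at some scale) is the standard one. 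Two of the deferred steps deserve flagging beyond what you say. First, in the second reduction, Poincar\'e only controls $\vec u-\bar u_r(s)$; the spatial mean $\bar u_r(s)$ is not small and must be bounded by an induction over dyadic scales that keeps $A(r)$ under control, using the equation to track the time evolution of the mean --- this is a genuinely delicate point of the second CKN proposition, on a par with the pressure feedback you do flag. Second, in the regular direction, $p\in L^{3/2}_{t,x,\mathrm{loc}}$ alone does not make $P(r)=r^{-2}\iint_{Q_r}\vert p\vert^{3/2}$ small as $r\to 0$, so the H\"older conclusion near a regular point should be obtained from the bootstrap (Serrin-type regularity for bounded $\vec u$ with $\vec f\in L^2_tH^1_x$), as you already do for the gradient decay, rather than from the $\epsilon$-criterion. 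With those two caveats your outline is the correct classical proof of the cited theorem.
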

 
 The size of $\Sigma(\vec u)$ is then easily controlled with the following lemma :
 \begin{lemma}[\bf Parabolic Hausdorff dimension.]Let $u$ belongs locally to $L^2_t H^1_x$ and let $\Sigma$ be the set defined by
 $$ (t,x)\in \Sigma\Leftrightarrow  \limsup_{r\rightarrow 0} \frac{1}{r}\iint_{Q_r(t,x)} \vert \vec\nabla  u\vert^2\, ds\, dy >0.$$ Then  $\Sigma$ has parabolic one-dimensional Hausdorff measure equal to $0$.
 \end{lemma}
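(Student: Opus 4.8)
The plan is to reduce everything to a Vitali-type covering estimate built on two facts: the scaling relation coming from the defining inequality $\iint_{Q_r(t,x)}|\vec\nabla u|^2\,ds\,dy>\delta r$, and the local integrability $g:=|\vec\nabla u|^2\in L^1_{\rm loc}$. For $\delta>0$ set
$$\Sigma_\delta=\Big\{(t,x):\ \limsup_{r\to0}\frac1r\iint_{Q_r(t,x)}g\,ds\,dy>\delta\Big\},$$
so that $\Sigma=\bigcup_{k\geq1}\Sigma_{1/k}$. Since a countable union of sets of vanishing parabolic one-dimensional Hausdorff measure is again of vanishing measure, it suffices to prove $\mathcal H^1(\Sigma_\delta)=0$ for each fixed $\delta>0$; and since the question is local, I may fix once and for all a relatively compact cylinder on which $g\in L^1$ and work inside it.

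I would first record that $\Sigma_\delta$ is Lebesgue-negligible. The cylinders $Q_r(t,x)$ are exactly the balls of the parabolic distance $\rho$, which is doubling, so they form a differentiation basis and the Lebesgue differentiation theorem yields $\frac1{|Q_r(t,x)|}\iint_{Q_r}g\to g(t,x)<\infty$ for almost every $(t,x)$. As $|Q_r(t,x)|=c\,r^5$, this forces
$$\frac1r\iint_{Q_r}g=c\,r^4\cdot\frac1{|Q_r|}\iint_{Q_r}g\longrightarrow0\quad(r\to0)\quad\text{a.e.},$$
so that $|\Sigma_\delta|=0$.

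The heart of the argument is then the covering estimate. Fix $\gamma,\varepsilon>0$. By absolute continuity of the integral together with outer regularity of Lebesgue measure (and $|\Sigma_\delta|=0$), choose an open set $U\supset\Sigma_\delta$ with $\iint_U g<\varepsilon$. For every $(t,x)\in\Sigma_\delta$ the definition supplies arbitrarily small radii $r<\gamma$ with $Q_r(t,x)\subset U$ and $\iint_{Q_r(t,x)}g>\delta r$; let $\mathcal F$ be the collection of all such cylinders. As the radii in $\mathcal F$ are bounded by $\gamma$ and $\rho$ is a genuine distance (the spatial part $|x-y|$ and the temporal part $\sqrt{|t-s|}$ each obey the triangle inequality, and a maximum of distances is a distance), the basic $5r$-covering lemma extracts a countable disjoint subfamily $\{Q_{r_i}(t_i,x_i)\}_i\subset\mathcal F$ with $\Sigma_\delta\subset\bigcup_iQ_{5r_i}(t_i,x_i)$. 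Disjointness and $Q_{r_i}\subset U$ give
$$\sum_ir_i<\frac1\delta\sum_i\iint_{Q_{r_i}}g=\frac1\delta\iint_{\bigsqcup_iQ_{r_i}}g\leq\frac1\delta\iint_Ug<\frac{\varepsilon}{\delta}.$$
The dilated cylinders $Q_{5r_i}$ cover $\Sigma_\delta$ and have $\rho$-diameter at most $10r_i<10\gamma$, so the Hausdorff pre-measure obeys $\mathcal H^1_{10\gamma}(\Sigma_\delta)\leq\sum_i10\,r_i<10\,\varepsilon/\delta$. Letting $\varepsilon\to0$ gives $\mathcal H^1_{10\gamma}(\Sigma_\delta)=0$ for every $\gamma$, hence $\mathcal H^1(\Sigma_\delta)=0$, and therefore $\mathcal H^1(\Sigma)=0$.

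The one genuinely delicate point is this covering step: one must extract a subfamily that is simultaneously disjoint (so the masses $\iint_{Q_{r_i}}g$ add up and are controlled by $\iint_Ug$), whose fixed dilates still cover $\Sigma_\delta$, and whose radii stay below the prescribed gauge $\gamma$ (so that one bounds the true Hausdorff measure and not merely the Hausdorff content). Everything else—the reduction to fixed $\delta$, the scaling computation showing $|\Sigma_\delta|=0$, and the passage to the limit—is routine.
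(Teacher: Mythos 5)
The paper states this lemma without proof (it is the classical covering lemma underlying the Caffarelli--Kohn--Nirenberg theorem, for which the authors implicitly defer to \cite{CKN} and \cite{LEM}), so there is no in-paper argument to compare yours against. Your proof is correct, and it is the standard one: the reduction to the sets $\Sigma_\delta$, the observation that they are Lebesgue-null, the choice of an open neighbourhood $U$ of small mass by absolute continuity, and the Vitali $5r$-covering extraction of a disjoint subfamily with radii below the gauge, yielding $\mathcal{H}^1_{10\gamma}(\Sigma_\delta)\leq 10\varepsilon/\delta$. You correctly isolate the two points that actually need checking, namely that the parabolic quasi-distance is a genuine metric (so the basic covering lemma applies) and that the radii must stay below the gauge $\gamma$ (so one controls the Hausdorff measure and not merely the content). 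One small remark: the Lebesgue differentiation step can be bypassed, since a first pass of the same covering argument with $U$ replaced by the whole cylinder already gives $\mathcal{H}^1(\Sigma_\delta)\leq 10\delta^{-1}\iint_Q g<\infty$, whence $|\Sigma_\delta|=0$ because sets of finite one-dimensional parabolic Hausdorff measure are null for the five-dimensional (i.e.\ Lebesgue) measure; but your route through the doubling property of the parabolic metric is equally valid.
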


  Chamorro, Lemari\'e--Rieusset and Mayoufi \cite{CLM} have considered the case where no integrability assumptions were made on the pressure $p$. This implies to change the definition of suitable solutions. Following \cite{DUC}, they introduced the notion of dissipative solutions :

 \begin{definition}[\bf Dissipative solutions] Let $\vec u$ be a local weak solutions of the Navier--Stokes solutions on a domain $\Omega\subset \mathbb{R}\times\mathbb{R}^3$. Then $\vec u$ is dissipative  if   $M(\vec u)\geq 0$.
  \end{definition}
 
 A similar notion has been given by Wolf \cite{WOL}.  Indeed, if $\vec u$ is dissipative and if we use the harmonic correction $\vec H=-\vec\nabla\varpi$, we find, for $\vec U=\vec u+\vec H$ :
 \begin{eqnarray*}
 M(\vec U)&=&-\partial_t\vert\vec U\vert^2+\Delta(\vert \vec U\vert^2)-2\vert\vec\nabla\otimes\vec U\vert^2-   \text{ \rm div }(\vert \vec U\vert^2\vec U)  - 2  \text{ \rm div }(P\vec U)    +2\vec U\cdot\vec F\\ 
 &=&-\partial_t\vert\vec U\vert^2+\Delta(\vert \vec U\vert^2)-2\vert\vec\nabla\otimes\vec U\vert^2-   \text{ \rm div }((\vert \vec U\vert^2+2p_0)\vec U) \\ 
 &  &  +2\vec U\cdot\vec f-2\vec U\cdot\vec f_1-2\text{ \rm div }(p_1\vec U) \\ 
 &=&-\partial_t\vert\vec U\vert^2+\Delta(\vert \vec U\vert^2)-2\vert\vec\nabla\otimes\vec U\vert^2-   \text{ \rm div }((\vert \vec U\vert^2+2p_0)\vec U) \\
 & &    +2 \vec U\cdot\vec f +2 \vec U\cdot (\vec \nabla\varpi\wedge (\vec\nabla\wedge\vec U)).
\end{eqnarray*}
 Writing $M(\vec U)\geq 0$ is exactly expressing that $\vec u$ is a generalized suitable solution, as defined by Wolf.\\
 
 Another tool used by   Chamorro, Lemari\'e--Rieusset and Mayoufi is the notion of parabolic Morey space :
 
  \begin{definition}[\bf Parabolic Morrey spaces]  A function   $\theta$ belongs to the parabolic Morrey space $\mathcal{M}^{s,\tau}(\Omega)$ if 
$$ \sup_{x_0,t_0,r}  \frac{1}{r^{ 5(1-\frac s \tau)}}\iint_\Omega 1_{\vert t-t_0\vert<r^2} 1_{\vert x-x_0\vert<r} \vert\theta(t,x)\vert^{ s}\, dt\, dx <+\infty.$$
  \end{definition}
  
  Parabolic Morrey spaces have been used by Kukavica \cite{KUK} in a variant of Caffarelli, Kohn and Nirenberg's theorem \cite{CKN}, and by O'Leary \cite{OLE, LEM} in a variant of Serrin's regularity theorem \cite{SER} : 
\begin{theorem}[\bf Kukavica's theorem]   There exists  a positive constant $\epsilon^*$  such that  the following holds : 
If $\vec U$ is a solution of the Navier--Stokes equations on a domain $\Omega_1$, associated to a force $\vec F$ and a pressure $P$ and if $x_0$, $t_0$, $\vec U$, $P$ and $\vec F$ satisfy the following assumptions 
\begin{itemize}
\item    $\vec  U$  belongs to  $L^\infty_t L^2_x\cap L^2_t H^1_x$,
\item  $P\in L^{3/2}_{t,x}(\Omega)$,
\item  $\text{ div }\vec F=0$ and   $\vec  F \in  L^2_{t,x}(\Omega_1)$,  
\item  $\vec U$ is suitable,  
\item $(t_0,x_0)\in \Omega_1$ and  $$\limsup_{r\rightarrow 0} \frac{1}{r}\iint_{(t_0-r^2,t_0+r^2)\times B(x_0,r)} \vert\vec\nabla\otimes \vec U\vert^2\, ds\, dx< \epsilon^*,$$
\end{itemize}
then  there exists  $\tau>5$  and a neighborhood      $\Omega_2$ of $(t_0,x_0)$  such that  $  \vec U\in \mathcal{M}^{3,\tau}(\Omega_2)$. 
\end{theorem}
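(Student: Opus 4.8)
The plan is to run the classical Caffarelli--Kohn--Nirenberg iteration on parabolic cylinders, but to keep careful track of the gain so as to land in a Morrey space strictly above the critical exponent $\tau=5$. First I would reduce the pointwise $\limsup$ hypothesis at $(t_0,x_0)$ to a uniform statement: by absolute continuity of the integral of $|\vec\nabla\otimes\vec U|^2$, there is a fixed radius $R>0$ and a neighborhood $\Omega_2$ of $(t_0,x_0)$ on which $\frac{1}{R}\iint_{Q_R(t,x)}|\vec\nabla\otimes\vec U|^2\,ds\,dx<\epsilon^*$ for every center $(t,x)\in\Omega_2$. It then suffices to produce, uniformly over such centers and over small radii $r$, a bound $\frac{1}{r^2}\iint_{Q_r(t,x)}|\vec U|^3\le C\,r^{2\beta}$ for some $\beta>0$: indeed this is exactly the statement $\vec U\in\mathcal{M}^{3,\tau}(\Omega_2)$ with $5(1-3/\tau)=2+2\beta$, that is $\tau=15/(3-2\beta)>5$.

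Second, I would introduce the scale-invariant quantities
$$\alpha(r)=\frac{1}{r}\sup_{|t-t_0|<r^2}\int_{B_r}|\vec U|^2\,dx+\frac{1}{r}\iint_{Q_r}|\vec\nabla\otimes\vec U|^2\,ds\,dx,\quad \gamma(r)=\frac{1}{r^2}\iint_{Q_r}|\vec U|^3,\quad \pi(r)=\frac{1}{r^2}\iint_{Q_r}|P|^{3/2}.$$
The engine is the local energy inequality, available here because suitability gives $M(\vec U)\ge 0$, so the energy balance of the previous theorem becomes $\partial_t|\vec U|^2\le \Delta(|\vec U|^2)-2|\vec\nabla\otimes\vec U|^2-\mathrm{div}((|\vec U|^2+2P)\vec U)+2\vec U\cdot\vec F$. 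Testing against a suitable cut-off controls $\alpha(\theta r)$, for fixed $\theta\in(0,1)$, by the nonlinear transport term and the pressure term at scale $r$, which by H\"older are estimated by powers of $\gamma(r)$ and $\pi(r)$ (the divergence-free force $\vec F\in L^2_{t,x}$ contributing only a subcritical remainder). A Gagliardo--Nirenberg--Sobolev interpolation between $L^\infty_tL^2_x$ and $L^2_tH^1_x$ gives in turn $\gamma(r)\le C\,\alpha(r)^{3/2}$, and the same computation shows $\iint_{Q_r}|\vec U|^{10/3}\le C\,r^{5/3}\alpha(r)^{5/3}$, so that boundedness alone only reaches the \emph{critical} membership $\vec U\in\mathcal{M}^{10/3,5}\subset\mathcal{M}^{3,5}$.

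Third, I would close the system with the pressure estimate. Writing the pressure equation $-\Delta P=\sum_{i,j}\partial_i\partial_j(U_iU_j)$ inside the cylinder, I split $P$ into a local Calder\'on--Zygmund part driven by $\vec U\otimes\vec U$ and a part harmonic in space; the harmonic part obeys an interior decay estimate $\pi_{\mathrm{harm}}(\theta r)\le C\theta\,\pi(r)$, while the local part is controlled by $\gamma(r)$. Combined with the energy and interpolation inequalities, this yields a closed recursion of the schematic form
$$\alpha(\theta r)+\gamma(\theta r)+\pi(\theta r)\le C_\theta\big(\alpha(r)+\gamma(r)+\pi(r)\big)^{3/2}+C\,\theta^{\,a}\big(\alpha(r)+\gamma(r)+\pi(r)\big)+(\text{force}).$$
Choosing $\theta$ small and then using the smallness $\epsilon^*$ to start the iteration in the contractive regime, one obtains geometric decay $\alpha(\theta^k R)+\gamma(\theta^k R)+\pi(\theta^k R)\le C(\theta^k)^{2\beta}$ for some $\beta>0$, uniformly over the centers in $\Omega_2$. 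This is precisely $\gamma(r)\le C\,r^{2\beta}$, which gives the desired $\vec U\in\mathcal{M}^{3,\tau}(\Omega_2)$ with $\tau>5$.

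The hardest point will be this third step: extracting a \emph{strict} gain $\beta>0$ rather than mere boundedness of the scale-invariant quantities. Boundedness of $\alpha$ and $\gamma$ only yields the critical $\vec U\in\mathcal{M}^{10/3,5}$; the supercriticality $\tau>5$ comes entirely from the interplay between the superlinear term $(\alpha+\gamma+\pi)^{3/2}$, which decays fast once the quantities are small, and the linear term with its adjustable factor $\theta^{a}$, and one must check that the harmonic-part decay together with the $L^{3/2}$ bound on $P$ keep the recursion contractive with a genuine margin. The divergence-free hypothesis on $\vec F$ is used precisely here, to prevent the force from feeding the pressure and spoiling the gain.
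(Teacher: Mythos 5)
The paper does not actually prove this statement: it is quoted from Kukavica \cite{KUK} as a known variant of the Caffarelli--Kohn--Nirenberg theorem, so there is no internal proof to compare against. Your sketch follows the standard CKN/Kukavica scheme --- scale-invariant quantities, local energy inequality obtained from $M(\vec U)\geq 0$, harmonic plus Calder\'on--Zygmund splitting of the pressure, geometric decay converted into the Morrey exponent $\tau=15/(3-2\beta)>5$ --- and most individual ingredients are right, including the decay $\pi_{\mathrm{harm}}(\theta r)\leq C\theta\,\pi(r)$ for the harmonic part and the observation that $\text{div}\,\vec F=0$ keeps the force out of the pressure equation.

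There is, however, a genuine gap in how you launch the iteration. Your first step only yields smallness of the Dirichlet part, $\frac1R\iint_{Q_R(t,x)}\vert\vec\nabla\otimes\vec U\vert^2<\epsilon^*$, at the \emph{single} scale $R$, uniformly over centers $(t,x)\in\Omega_2$. But to ``start the iteration in the contractive regime'' for a recursion of the form $\alpha(\theta r)+\gamma(\theta r)+\pi(\theta r)\leq C_\theta(\cdots)^{3/2}+C\theta^a(\cdots)+\cdots$ you need the \emph{whole} vector $\alpha(R)+\gamma(R)+\pi(R)$ to be small, so that the superlinear term is dominated; at scale $R$ these quantities are of order $\Vert\vec U\Vert^2_{L^\infty_tL^2_x}/R$, $\Vert\vec U\Vert^3_{L^{10/3}_{t,x}}/R^2$ and $\Vert P\Vert^{3/2}_{L^{3/2}_{t,x}}/R^2$ --- finite but in no way small. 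Moreover, the hypothesis gives $\frac1r\iint_{Q_r}\vert\vec\nabla\otimes\vec U\vert^2<\epsilon^*$ for \emph{all} $r\leq R$ only at the center $(t_0,x_0)$; at a displaced center at parabolic distance $d$ you only inherit this for $r\gtrsim d$, so you cannot run a multi-scale argument down to scale $0$ there either. The standard repair is the two-stage architecture that your sketch collapses into one: first, at $(t_0,x_0)$ only, use the smallness of the Dirichlet term at all scales together with the interpolation $\gamma(r)\leq C\bigl[(r/\rho)^3\alpha(\rho)^{3/2}+(\rho/r)^3\alpha(\rho)^{3/4}\bigl(\tfrac1\rho\iint_{Q_\rho}\vert\vec\nabla\otimes\vec U\vert^2\bigr)^{3/4}\bigr]$ to show that $\alpha$ stays bounded on the descent and hence that $\gamma(r_1)+\pi(r_1)$ drops below the one-scale threshold at some fixed scale $r_1$; second, note that $\gamma(r_1)+\pi(r_1)$ is continuous in the center, hence stays below threshold on a neighborhood $\Omega_2$; third, run the one-scale self-improving $\epsilon$-regularity recursion from scale $r_1$ at each center of $\Omega_2$ --- it is only this last recursion that needs, and then propagates, smallness of all the quantities, and it is the one that produces $\gamma(r)\leq Cr^{2\beta}$, i.e. $\vec U\in\mathcal{M}^{3,\tau}(\Omega_2)$ with $\tau>5$. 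With that restructuring your plan coincides with Kukavica's argument.
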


\begin{theorem}[\bf O'Leary's theorem]   If $\vec u$ is a solution of the Navier--Stokes equations on a domain $\Omega_2$, associated to a force $\vec f$  and if   $\vec u$ and $\vec f$ satisfy the following assumptions 
\begin{itemize}
\item    $\vec u$  belongs to  $L^\infty_t L^2_x\cap L^2_t H^1_x$, 
\item  $\text{ div }\vec f=0$ and   $\vec  f \in  L^2_t H^k_x(\Omega_2)$  for some $k\in\mathbb{N}$,
 \item $ \vec u\in \mathcal{M}^{s,\tau}(\Omega_2) \text{ with }  \tau>5\text{ and } 2<s\leq \tau$,
 \end{itemize}
 then, for every subdomain $\Omega_3$ which is relatively compact in $\Omega_2$, we have
 $$ \vec u\in L^\infty_t H^{k+1}_{x}\cap L^2_t H^{k+2}_{x}(\Omega_3).$$
\end{theorem}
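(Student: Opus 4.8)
The plan is to prove the statement in two stages: first upgrade the subcritical Morrey hypothesis into local boundedness (in fact local H\"older regularity) of $\vec u$, and then bootstrap from boundedness to the full Sobolev regularity by exploiting the regularity of the force. To localize, I would fix the subdomain $\Omega_3\subset\subset\Omega_2$, choose an intermediate cylinder and a cut-off $\chi$ equal to $1$ near $\Omega_3$ and supported in $\Omega_2$, and rewrite the equation for $\chi\vec u$ through the Duhamel formula for the heat semigroup. The pressure is recovered from the quadratic term via the Leray projection $\mathbb{P}$: since $\vec u\in\mathcal{M}^{s,\tau}$ forces $\vec u\otimes\vec u\in\mathcal{M}^{s/2,\tau/2}$ (the homogeneity $5(1-s/\tau)$ being preserved, as $(s/2)/(\tau/2)=s/\tau$), and since Riesz transforms are bounded on parabolic Morrey spaces \cite{LEM}, the pressure is controlled in the same Morrey scale. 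The only genuinely singular part of $p$ in time is absorbed by the harmonic correction $\vec H=-\vec\nabla\varpi$ of Lemma \ref{CORREC}, which is bounded and smooth in space and therefore inert for the bootstrap.

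The first stage is O'Leary's subcritical Serrin-type argument \cite{OLE}. The key analytic input is an Adams-type mapping property for the parabolic potential $(\partial_t-\Delta)^{-1}$ on Morrey spaces \cite{LEM}: applied to $\text{div}(\vec u\otimes\vec u)$, it raises the scaling degree by $2$, so that one Duhamel iteration improves the homogeneity of $\vec u$ by $1-\frac{5}{\tau}>0$ (the building block $\vec u\otimes\vec u$ having degree $-\frac{10}{\tau}$, its divergence degree $-\frac{10}{\tau}-1$, and the potential restoring $+2$). This strict gain is precisely where the subcriticality $\tau>5$ enters. Iterating finitely many times pushes the homogeneity of $\chi\vec u$ into the positive range, at which point a Morrey--Campanato embedding yields local H\"older continuity, and in particular $\vec u\in L^\infty_{t,x}$ on a neighborhood of $\Omega_3$.

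For the second stage, once $\vec u$ is locally bounded the nonlinearity becomes tame. Writing $\vec u\cdot\vec\nabla\vec u=\text{div}(\vec u\otimes\vec u)$ and using $\vec u\in L^\infty_{t,x}\cap L^2_tH^1_x$ locally, we get $\vec u\otimes\vec u\in L^2_tH^1_x$, so the right-hand side of $\partial_t\vec u-\Delta\vec u=-\mathbb{P}\,\text{div}(\vec u\otimes\vec u)+\vec f$ lies in $L^2$ locally (here $\mathbb{P}\vec f=\vec f$ since $\vec f$ is divergence-free). Parabolic maximal regularity on the localized equation then gives $\vec u\in L^\infty_tH^1_x\cap L^2_tH^2_x(\Omega_3)$, settling the case $k=0$. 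For $k\geq1$ I would run an induction: assuming $\vec u\in L^\infty_tH^j_x\cap L^2_tH^{j+1}_x$ locally for some $1\leq j\leq k$, the boundedness of $\vec u$ together with the Moser product estimate ($H^{j+1}$ being an algebra for $j\geq1$) bounds $\text{div}(\vec u\otimes\vec u)$ in $L^2_tH^j_x$, and since $\vec f\in L^2_tH^k_x\subseteq L^2_tH^j_x$, applying $(\partial_t-\Delta)^{-1}$ to the cut-off equation gains one spatial derivative and yields level $j+1$. Iterating from $j=1$ up to $j=k$ produces $\vec u\in L^\infty_tH^{k+1}_x\cap L^2_tH^{k+2}_x(\Omega_3)$.

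I expect the main obstacle to be the first stage, and within it the treatment of the pressure with no a priori integrability. The two delicate points are, first, making the recovery of the pressure compatible with the Morrey bootstrap --- this is where Lemma \ref{CORREC} does the essential work, trading the time-singular part of $p$ for the harmless field $\vec H$ --- and second, verifying that the parabolic Riesz-potential estimates on Morrey spaces produce a strictly positive gain of scaling degree at each step, and that the localization commutators (terms carrying derivatives of $\chi$, supported where $\vec u$ is already controlled) do not spoil the iteration. Once local boundedness is secured, the second stage is a routine parabolic bootstrap.
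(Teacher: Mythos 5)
The paper does not prove this statement: it is quoted as an external result from \cite{OLE} and \cite{LEM}, so there is no internal proof to compare against. Your two-stage plan --- a subcritical Morrey bootstrap via parabolic Riesz potentials to reach local boundedness, followed by a Sobolev-scale parabolic bootstrap driven by the regularity of $\vec f$ --- is exactly the argument of those references, and the scaling count ($\vec u\otimes\vec u$ at degree $-10/\tau$, $+2$ restored by the parabolic potential, net gain $1-5/\tau>0$) is correct.

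Two points need repair. First, you claim the first stage yields local H\"older continuity of $\vec u$; in the parabolic metric this is false, as Serrin's example in Section \ref{SERR} shows: $\vec u=\alpha(t)\vec\nabla\psi$ with $\alpha$ merely bounded lies in every local Morrey space but has no continuity in time. Only $L^\infty_{t,x}$ and spatial regularity can be extracted --- which is all the theorem asserts and all your second stage uses, so this is an overstatement rather than a fatal error. But it signals the second, more serious point: $\vec u$ itself does not satisfy a Duhamel formula, because $\partial_t\vec u$ contains the gradient of the time-singular part $\partial_t\varpi$ of the pressure, and the global Leray projection cannot simply be applied to an equation that holds only in $\mathcal{D}'(Q)$. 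The iteration must be run either on the vorticity $\vec\omega=\vec\nabla\wedge\vec u$ (whose equation is pressure-free, $\vec u$ being recovered spatially from $\vec\omega$ up to a harmonic, hence spatially smooth, field) or on the corrected field $\vec U=\vec u+\vec H$ of Lemma \ref{CORREC}, whose pressure lies in $L^{3/2}_{t,x}$. If you choose the latter, note that the associated force $\vec F$ is a priori only in $L^2_tL^2_x$ even when $\vec f\in L^2_tH^k_x$, because of the contribution of $(\vec\nabla\wedge\vec u)\wedge\vec H$; the induction still closes because this term carries one derivative fewer than $\vec u$ while $\vec H$ is spatially smooth, so it gains regularity at each step of the bootstrap along with the solution. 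With these adjustments your outline is correct.
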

 
Using those theorems,  Chamorro, Lemari\'e--Rieusset and Mayoufi \cite{CLM} could prove the following theorem (which is essentially the result proved  previously by Wolf \cite{WOL}) : 
  \begin{theorem}[\bf Wolf's theorem]  Let $\Omega$ be a   domain in $\mathbb{R}\times\mathbb{R}^3$,  $\vec f\in L^2_{\rm loc}(\Omega)$ a divergence-free time-dependent vector field and    $\vec u$    a local weak solution of the Navier--Stokes equations on $\Omega$ (associated to the force $\vec f$). Assume that moreover
 \begin{itemize}
 \item $\vec u$ is dissipative,
 \item the force $\vec f$ is regular : $\vec f$ belongs locally to $L^2_tH^1_x$,
 \end{itemize} 
 Then:
 \begin{itemize}
 \item if $(t,x)\notin \Sigma(\vec u)$, then
 $$ \lim_{r\rightarrow 0} \frac{1}{r}\iint_{Q_r(t,x)} \vert \vec\nabla\otimes\vec u\vert^2\, ds\, dy =0.$$
 \item if $(t,x)\in  \Sigma(\vec u)$, then
 $$ \limsup_{r\rightarrow 0} \frac{1}{r}\iint_{Q_r(t,x)} \vert \vec\nabla\otimes\vec u\vert^2\, ds\, dy \geq\epsilon^*$$ where $\epsilon^*$ is a positive constant (which doesn't depend on $\vec u$, $\vec f$ nor $\Omega$).
 \end{itemize}

 \end{theorem}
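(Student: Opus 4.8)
The plan is to reduce the dissipative setting to the classical suitable setting by means of a harmonic correction, which makes Kukavica's theorem available, and then to recover full local boundedness by feeding the \emph{regular} force into O'Leary's theorem applied to $\vec u$ itself. First I would fix a point $(t_0,x_0)$ and a spherical cylinder $Q\subset\subset\Omega$ containing it, and invoke Lemma \ref{CORREC} to produce a harmonic correction $\vec H$ on $Q$, setting $\vec U=\vec u+\vec H$. By construction $\vec U$ solves the Navier--Stokes equations on $Q$ with a pressure $P\in L^{3/2}_{t,x}(Q)$ and a divergence-free force $\vec F\in L^2_{t,x}(Q)$. The decisive point is part D) of the Energy balance theorem, which gives $M(\vec U)=M(\vec u)$: thus dissipativity of $\vec u$ is exactly $M(\vec U)\geq 0$, and since $\vec U$ carries a pressure in $L^{3/2}_{t,x}$, the field $\vec U$ is a \emph{suitable} solution in the classical sense, so that Kukavica's theorem applies to it. Since $\vec H$ is smooth in space with $\vec H$ and $\vec\nabla\otimes\vec H$ bounded on $Q$, the singular sets of $\vec u$ and $\vec U$ coincide on $Q$, and the scaled gradient energies agree in the limit,
\[ \limsup_{r\to 0}\frac 1r\iint_{Q_r(t_0,x_0)}|\vec\nabla\otimes\vec u|^2\,ds\,dy=\limsup_{r\to 0}\frac 1r\iint_{Q_r(t_0,x_0)}|\vec\nabla\otimes\vec U|^2\,ds\,dy, \]
because $\frac 1r\iint_{Q_r}|\vec\nabla\otimes\vec H|^2=O(r^4)$ and the cross term is controlled by Cauchy--Schwarz; the same holds with $\limsup$ replaced by $\lim$.

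For the first bullet I would assume $(t_0,x_0)\in R(\vec u)$, so that $\vec u\in L^\infty_{t,x}$ on a neighborhood. Then $\vec u$ lies in every parabolic Morrey space $\mathcal{M}^{3,\tau}$ locally, and O'Leary's theorem applied to $\vec u$ with the regular force $\vec f\in L^2_tH^1_x$ (that is, $k=1$, using $2<3\leq\tau$ and $\tau>5$) yields $\vec u\in L^\infty_tH^2_x\cap L^2_tH^3_x$ on a smaller neighborhood. In particular $\vec\nabla\otimes\vec u\in L^\infty_tL^6_x$, whence $\frac 1r\iint_{Q_r}|\vec\nabla\otimes\vec u|^2=O(r^3)\to 0$.

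For the second bullet I would argue by contraposition. Suppose $\limsup_{r\to 0}\frac 1r\iint_{Q_r(t_0,x_0)}|\vec\nabla\otimes\vec u|^2<\epsilon^*$. By the identity above the same bound holds for $\vec U$, and since $\vec U$ is suitable with $P\in L^{3/2}_{t,x}$ and $\vec F\in L^2_{t,x}$, Kukavica's theorem provides some $\tau>5$ and a neighborhood on which $\vec U\in\mathcal{M}^{3,\tau}$. Because $\vec H$ is bounded, $\vec u=\vec U-\vec H$ belongs to the same $\mathcal{M}^{3,\tau}$. At this point I would \emph{return to $\vec u$ and its original force}: O'Leary's theorem requires no control on the pressure, so it applies to $\vec u$ with $\vec f\in L^2_tH^1_x$ ($k=1$), giving $\vec u\in L^\infty_tH^2_x\cap L^2_tH^3_x$ near $(t_0,x_0)$. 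Since $H^2_x\hookrightarrow L^\infty_x$ in dimension three, $\vec u\in L^\infty_{t,x}$ there, i.e. $(t_0,x_0)\in R(\vec u)$; equivalently, any $(t_0,x_0)\in\Sigma(\vec u)$ satisfies $\limsup\geq\epsilon^*$.

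The main obstacle is the pressure, which is exactly what places the dissipative case outside the classical CKN/Kukavica theory, and the whole difficulty is concentrated in the reduction step: one must manufacture a corrected field $\vec U$ that \emph{simultaneously} carries a pressure in $L^{3/2}_{t,x}$ (so Kukavica applies) and inherits the sign of the dissipation defect, $M(\vec U)=M(\vec u)\geq 0$ (so $\vec U$ is suitable), while leaving both the singular set and the scaled gradient energy unchanged. A secondary subtlety, which must be handled with care, is that the corrected force $\vec F$ is only $L^2_{t,x}$, so O'Leary's bootstrap applied to $\vec U$ would stop at $L^\infty_tH^1_x$, which is not enough for $L^\infty_{t,x}$; the way out is the twin observation that $M$ is invariant under harmonic corrections and that O'Leary's bootstrap needs only the Morrey bound and the force — not the pressure — so one may transfer the Morrey gain from $\vec U$ back to $\vec u$ and exploit the genuinely regular force $\vec f\in L^2_tH^1_x$ in the final step.
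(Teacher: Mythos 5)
Your proposal is correct and follows essentially the same route as the paper: harmonic correction $\vec U=\vec u+\vec H$, the invariance $M(\vec U)=M(\vec u)$ to obtain suitability of $\vec U$, Kukavica's theorem for the local Morrey bound, transfer of that bound back to $\vec u$, and O'Leary's theorem applied to $\vec u$ with the regular force $\vec f\in L^2_tH^1_x$ to conclude $\vec u\in L^\infty_tH^2_x\subset L^\infty_{t,x}$. The subtlety you single out --- that O'Leary must be applied to $\vec u$ with its original force rather than to $\vec U$, whose force $\vec F$ is only $L^2_{t,x}$ --- is precisely the point the paper's own sketch relies on.
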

 
 \begin{proof} We sketch the proof given in \cite{CLM, LEM}. Let $\epsilon^*$ be the constant in Kukavica's theorem. Let $(x_0,t_0)\in \Omega$ with
  $$ \limsup_{r\rightarrow 0} \frac{1}{r}\iint_{Q_r(t_0,x_0)} \vert \vec\nabla\otimes\vec u\vert^2\, ds\, dy <\epsilon^*.$$ We introduce a harmonic correction $\vec H$  on a cylindric neighborhood of $(x_0,t_0)$ and consider the vector field $\vec U=\vec u+\vec H$. If $\vec u$ is dissipative, then $\vec U$ is suitable, associated to a force $\vec F\in L^2_tL^2_x(Q)$ and a pressure $P\in L^{3/2}_tL^{3/2}_x(Q)$. Moreover,
    $$ \limsup_{r\rightarrow 0} \frac{1}{r}\iint_{Q_r(t_0,x_0)} \vert \vec\nabla\otimes\vec U\vert^2\, ds\, dy  =\limsup_{r\rightarrow 0} \frac{1}{r}\iint_{Q_r(t_0,x_0)} \vert \vec\nabla\otimes\vec u\vert^2\, ds\, dy <\epsilon^*.$$ 
    Thus, by Kukavica's theorem,    there exists  $\tau>5$  and a neighborhood      $\Omega_2\subset Q$ of $(t_0,x_0)$  such that  $  \vec U\in \mathcal{M}^{3,\tau}(\Omega_2)$.  As $\vec u=\vec U-\vec H$, we see that we have as well $  \vec u\in \mathcal{M}^{3,\tau}(\Omega_2)$.  As $\vec f\in L^2_t H^1_x$, we may use O'Leary's theorem and find that, on a cylindric neighborhood $\Omega_3$ of $(t_0,x_0)$, we have $\vec u\in L^\infty_t H^2_x(\Omega_3)\subset L^\infty_{t,x}(\Omega_3)$. Thus, $(t_0,x_0)\notin \Sigma(\vec u)$.
 \end{proof}  
 
 \section{Weak convergence of local weak solutions.}\label{SERR}
 
 In this final section, we prove Theorem \ref{TheoCLM}. Recall that  we consider  a sequence  $(\vec f_n)_{n\in\mathbb{N}}$ of  divergence-free time-dependent vector fields on a domain  $\Omega \subset \mathbb{R}\times\mathbb{R}^3$  and   a sequence   $(\vec u_n)_{n\in\mathbb{N}}$  of   local  weak solutions of the Navier--Stokes equations on $\Omega$ (associated to the forces $\vec f_n$) such that, for each  cylinder $Q\subset\subset \Omega$,   we have
 \begin{itemize} 
 \item $\vec f_n\in L^2_t H^1_x(Q)$ and $\vec f_n$ converges weakly in $L^2_{t} H^1_{x}$ to a limit $\vec f$,
 \item the sequence $\vec u_n$ is bounded in $L^\infty_tL^2_x(Q)\cap L^2_t H^1_x(Q)$ and converges weakly in $ L^2_t H^1_x(Q)$ to  a limit $\vec u$, 
 \item for every $n$, $\vec u_n$ is bounded on $Q$ (the bound depending on $n$).
 \end{itemize}
We know that we may define a pressure $p_n$ on $Q$ and that we have the energy equality
$$ M(\vec u_n)=0,$$
 where
 \begin{eqnarray*}
M(\vec u_n)&=&- \partial_t\vert\vec u_n\vert^2+ \Delta(\vert \vec u_n\vert^2)-2\vert\vec\nabla\otimes\vec u_n\vert^2-  \text{ \rm div }(\vert \vec u_n\vert^2\vec u_n) \\
& & - 2 << \text{\rm div }(p_n\vec u_n) >>   +2\vec u_n\cdot \vec f_n .
\end{eqnarray*}
Our aim is then to prove that the limit $\vec u$ is a solution to the Navier--Stokes aquations associated to the limit $\vec f$ and that this solution is dissipative :
 $$ M(\vec u)\geq 0.$$
We cannot give a direct proof, as it is possible that no term in the definition of $M(\vec u_n)$  converge to the corresponding term in $M(\vec u)$ : $p$ is not the limit in $\mathcal{D}'$ of $p_n$ and $\vert \vec u \vert^2$ is not the limit in $\mathcal{D}'$ of $\vert\vec u_n\vert^2$\dots \ It is easy to find an example of such a bad behavior by studying Serrin's example of smooth in space and singular in time solution to the Navier--Stokes equations \cite{SER} :
 
 \begin{example}[\bf Serrin's example]  Let $\psi$ be defined on a neighborhood of $B(x_0,r_0)$ and be harmonic,
$ \Delta\psi=0$, and let $\vec f=0$ and $$\vec u=\alpha(t)\vec\nabla\psi(x),$$ where $\alpha\in L^\infty((a,b))$. Then $\vec u$ is a local weak solution of the Navier--Stokes equations on $(a,b)\times B(x_0,r_0)$ :
 $$ \partial_t \vec u= \Delta\vec u-\vec u\cdot \vec\nabla \vec u-\vec\nabla( - \dot \alpha  \psi -\frac{\vert\vec u\vert^2}2)+\vec f.$$
 \end{example}
 
 Clearly, if $\alpha$ is not regular, the pressure $p$ has no integrability in the time variable (because of the presence of the singular term $\dot \alpha(t)$)and $\vec u$ has no regularity in the time variable. Thus, $\vec u$ is dissipative (as a matter of fact, $M(\vec u)=0$) but not suitable, as it violates both assumptions and conclusions of the Caffarelli, Kohn and Nirenberg theorem.
\\

Let us adapt this example to our problem. We define $$ \vec u_n(t,x)= \cos(nt) \left(\begin{matrix} x_1 \cr -x_2\cr 0\end{matrix}\right)$$ 

\begin{itemize}\item   $\vec u_n$ is a solution on $\mathbb{R}\times \mathbb{R}^3$ of
\begin{equation*} \left\{\begin{split} \partial_t \vec u_n=&\Delta \vec u_n- (\vec u_n\cdot \vec\nabla)\vec u_n-\vec\nabla p_n  \\  {\rm div}\vec u_n=&0\end{split}\right. \end{equation*}
 \item  In this example, we have for a bounded domain $\Omega_0$  $$\vec u_n\rightharpoonup 0 $$ in $L^2_t H^1_x(\Omega_0)$ and 
 $$(\vec u_n\cdot \vec\nabla)\vec u_n\rightharpoonup \frac 1 2  \left(\begin{matrix} x_1 \cr x_2\cr 0\end{matrix}\right)\neq 0, $$ 
 in $\mathcal{D}'(\Omega_0)$.  
\end{itemize} 
In order to circumvene this problem of non-convergence, we shall use two tools~: equations on vorticities $\vec \omega_n=\vec\nabla\wedge \vec u_n$ and on harmonic corrections $\vec U_n=\vec u_n+\vec H_n=-\frac{1}{\Delta}\vec\nabla\wedge \left( \psi (\vec\nabla\wedge \vec u_n)\right)$.

\centerline{\bf Step 1 : Vorticities.}
On a cylinder $Q\subset\subset \Omega$, we may write the Navier--Stokes equations on the divergence-free vector field $\vec u_n$ in many ways. The first one is given by equation (\ref{WNS})  :  for every smooth compactly supported divergence-free vector field $\vec\phi\in \mathcal{D}(Q)$ we have
\begin{equation*} 
\iint_Q \vec u_n\cdot (\partial_t\vec\phi+\Delta\vec\phi)+ \vec u_n\cdot (\vec u_n\cdot \vec\nabla\vec\phi)+\vec f_n\cdot \vec\phi \, dt\, dx=0.
 \end{equation*}
 We may rewrite this equation as:
\begin{equation*}
 \partial_t\vec u_n = \Delta\vec u_n -  \vec u_n\cdot \vec\nabla\vec u_n+\vec  f_n\text{ in } (\mathcal{D}_\sigma(Q))'
\end{equation*}
where $\mathcal{D}_\sigma(Q)$ is the space of smooth compactly supported divergence-free vector fields on $Q$.\\

The second one is given by equations  (\ref{PRES}): for a distribution $p_n\in\mathcal{D}'(Q)$, we have
\begin{equation*} 
\partial_t\vec u_n=\Delta\vec u_n-\vec u_n\cdot \vec\nabla\vec u_n-\vec \nabla p_n+\vec f _n\quad \text{ in } \mathcal{D}'(Q).
\end{equation*}
 
The next one is based on the identity
$$ \vec u_n\cdot \vec\nabla \vec u_n= \vec\omega_n\wedge \vec u_n+\vec\nabla (\frac{\vert\vec u_n\vert^2}2)$$ from which we get
 \begin{equation*}
 \partial_t\vec u_n = \Delta\vec u_n -  \vec \omega_n \wedge\vec u_n+\vec  f_n\text{ in } (\mathcal{D}_\sigma(Q))'.
\end{equation*}

We have seen that, in some cases, we don't have the convergence of  $ (\vec u_n\cdot \vec\nabla)\vec u_n$ to $\vec u\cdot \vec\nabla \vec u$  in $\mathcal{D}'(\Omega_0)$.  But we shall prove the following lemma :
\begin{lemma}[\bf Convergence of the non-linear term] We have the following convergence results : 
 $$\vec \omega_n\wedge\vec u_n\rightharpoonup  \vec\omega\wedge\vec u\text{ in }\mathcal{D}'(Q)$$
 so that
  $$(\vec u_n\cdot \vec\nabla)\vec u_n\rightharpoonup  \vec u\cdot \vec\nabla\vec u \text{ in } (\mathcal{D}_\sigma(Q))' .$$
\end{lemma}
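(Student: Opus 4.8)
The plan is to upgrade the weak convergence $\vec u_n\rightharpoonup\vec u$ in $L^2_tH^1_x(Q)$ to \emph{strong} convergence in $L^2_{t,x}$ on compact subcylinders, and then to exploit the bilinear structure of $\vec\omega_n\wedge\vec u_n$ through a weak-times-strong pairing. As Serrin's example shows, the product of two merely weakly convergent factors need not converge to the product of the limits; strong convergence of one factor is exactly what will save us. I would first collect the uniform bounds: since $\vec u_n$ is bounded in $L^\infty_tL^2_x(Q)\cap L^2_tH^1_x(Q)$, the Sobolev embedding $H^1_x\hookrightarrow L^6_x$ together with interpolation gives a uniform bound in $L^{10/3}_{t,x}(Q)$, while $\vec\omega_n=\vec\nabla\wedge\vec u_n$ is bounded in $L^2_{t,x}(Q)$ and converges weakly to $\vec\omega=\vec\nabla\wedge\vec u$. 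Hölder's inequality with $\tfrac12+\tfrac{3}{10}=\tfrac45$ then shows that $\vec\omega_n\wedge\vec u_n$ is bounded in $L^{5/4}_{t,x}(Q)$.

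Next I would produce a uniform bound on $\partial_t\vec u_n$ in a space of negative regularity. The essential point is that one reads the equation in $(\mathcal{D}_\sigma(Q))'$,
$$\partial_t\vec u_n=\Delta\vec u_n-\vec\omega_n\wedge\vec u_n+\vec f_n,$$
so that the pressure — the one object we cannot control — never appears. Here $\Delta\vec u_n$ is bounded in $L^2_tH^{-1}_x$, the term $\vec\omega_n\wedge\vec u_n$ in $L^{5/4}_{t,x}$, and $\vec f_n$ in $L^2_tH^1_x$, which yields, after the Sobolev embedding $L^{5/4}_x\hookrightarrow H^{-m}_x$ for $m$ large enough, a uniform bound for $\partial_t\vec u_n$ in $L^{5/4}_tH^{-m}_x(Q)$ (tested against divergence-free fields).

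I would then invoke an Aubin--Lions--Simon compactness argument. With the uniform bound of $\vec u_n$ in $L^2_tH^1_x$, the uniform bound of $\partial_t\vec u_n$ in $L^{5/4}_tH^{-m}_x$, and the compactness of the embedding $H^1\hookrightarrow L^2$ on relatively compact spatial subdomains (Rellich), the sequence $\vec u_n$ is relatively compact in $L^2_{t,x}(Q')$ for every $Q'\subset\subset Q$; since the weak limit is already known to be $\vec u$, the whole sequence converges strongly, $\vec u_n\to\vec u$ in $L^2_{t,x}(Q')$, and interpolation against the $L^{10/3}$ bound even gives strong convergence in $L^p_{t,x}(Q')$ for all $p<10/3$. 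I expect this to be the main obstacle, because of the divergence-free constraint: the time-derivative bound lives in $(\mathcal{D}_\sigma(Q))'$ only, so to feed it into a genuine Banach-space compactness lemma one must either apply the Leray projector (nonlocal, hence awkward against the localization to $Q'$) or, equivalently, control the time oscillations of $\vec u_n$ directly by pairing the equation against divergence-free spatial test fields, for which the pressure gradient drops out.

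Finally I would pass to the limit in the product. Fixing $\vec\phi\in\mathcal{D}(Q)$ and using the scalar triple product identity $\vec\phi\cdot(\vec\omega_n\wedge\vec u_n)=\vec\omega_n\cdot(\vec u_n\wedge\vec\phi)$, I write
$$\iint_Q \vec\phi\cdot(\vec\omega_n\wedge\vec u_n)\,dt\,dx=\iint_Q \vec\omega_n\cdot(\vec u_n\wedge\vec\phi)\,dt\,dx.$$
On the compact support of $\vec\phi$ the factor $\vec u_n\wedge\vec\phi$ converges strongly in $L^2_{t,x}$ to $\vec u\wedge\vec\phi$, while $\vec\omega_n\rightharpoonup\vec\omega$ weakly in $L^2_{t,x}$; the pairing of a weakly convergent with a strongly convergent $L^2$ sequence converges, so the right-hand side tends to $\iint_Q\vec\omega\cdot(\vec u\wedge\vec\phi)=\iint_Q\vec\phi\cdot(\vec\omega\wedge\vec u)$. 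This establishes $\vec\omega_n\wedge\vec u_n\rightharpoonup\vec\omega\wedge\vec u$ in $\mathcal{D}'(Q)$. The second assertion is then immediate: since $\vec u_n\cdot\vec\nabla\vec u_n=\vec\omega_n\wedge\vec u_n+\vec\nabla(\tfrac12\vert\vec u_n\vert^2)$ and the gradient term annihilates divergence-free test fields, the same convergence transfers to $(\vec u_n\cdot\vec\nabla)\vec u_n\rightharpoonup\vec u\cdot\vec\nabla\vec u$ in $(\mathcal{D}_\sigma(Q))'$.
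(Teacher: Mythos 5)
There is a genuine gap, and it sits exactly at the point you yourself flag as ``the main obstacle'': that obstacle cannot be overcome along the route you propose. Your argument hinges on upgrading $\vec u_n\rightharpoonup\vec u$ to strong convergence in $L^2_{t,x}(Q')$, but this is false under the hypotheses of the lemma. The paper's own adaptation of Serrin's example, $\vec u_n(t,x)=\cos(nt)\,(x_1,-x_2,0)$, satisfies every hypothesis (a local weak solution with $\vec f_n=0$, bounded on $Q$ for each $n$, uniformly bounded in $L^\infty_tL^2_x\cap L^2_tH^1_x$), converges weakly to $0$, and yet $\|\vec u_n\|_{L^2_{t,x}(Q')}^2\to\frac12\iint_{Q'}(x_1^2+x_2^2)\,dt\,dx>0$, so there is no strong $L^2_{t,x}$ convergence. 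The failure of Aubin--Lions for $\vec u_n$ is structural, not technical: $\partial_t\vec u_n$ is controlled only modulo gradients, i.e.\ in $(\mathcal{D}_\sigma(Q))'$, and the uncontrolled gradient part (the pressure, which in this example behaves like $n\sin(nt)\vec\nabla\psi$ with $\psi$ harmonic) carries genuine, non-compact time oscillations of $\vec u_n$ itself. Pairing against divergence-free test fields, or applying a Leray projector, would at best give compactness of the solenoidal part of $\vec u_n$, which is not what enters your final pairing, since the test factor $\vec u_n\wedge\vec\phi$ is not divergence-free.

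The paper resolves this by placing the compactness on the other factor of the product. The vorticity equation
$\partial_t\vec\omega_n=\Delta\vec\omega_n-\mathrm{div}(\vec u_n\otimes\vec\omega_n-\vec\omega_n\otimes\vec u_n)-\vec\nabla\wedge\vec f_n$
contains no pressure at all, so $\partial_t\vec\omega_n$ is bounded in a genuine Banach space $(L^2_tH^{\sigma_1}_x)_{\rm loc}$ for $\sigma_1<-5/2$, and the Rellich-type lemma yields \emph{strong} convergence of $\vec\omega_n$ in $(L^2_tH^{-1}_x)_{\rm loc}$. One then pairs this strong $H^{-1}$ convergence with the \emph{weak} convergence of $\vec u_n\wedge\vec\phi$ in $L^2_tH^1_x$ --- the mirror image of your strong-times-weak pairing, with the roles of the two factors exchanged. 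Your Hölder bookkeeping, the triple-product identity, and the final reduction of $(\vec u_n\cdot\vec\nabla)\vec u_n$ to $\vec\omega_n\wedge\vec u_n$ modulo a gradient are all correct and reusable; what must change is the locus of compactness, which has to be moved from the velocity to the vorticity precisely because only the vorticity equation is pressure-free.
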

Thus, this lemma will prove the first half of Theorem \ref{TheoCLM}:   the limit $\vec u$ is a local weak solution on $\Omega$ of the Navier--Stokes equations associated to the force $\vec f$. The proof of the lemma is based on the following variant of the classical Rellich lemma \cite{LEM, MAY} : 
\begin{lemma}[\bf Rellich's lemma] Let  $-\infty<\sigma_1<\sigma_2<+\infty$.   Let $\Omega$ be a   domain in $\mathbb{R}\times\mathbb{R}^3$.   If a sequence of distribution $T_n$ is weakly convergent to a distribution $T$ in $(L^2_t H^{\sigma_2}_x)_{\rm loc}$ and if the sequence $(\partial_t T_n)$ is bounded in $(L^2_t H^{\sigma_1}_x)_{\rm loc}$, then $T_n$ is strongly convergent in $(L^2_t H^\sigma_x)_{\rm loc}$ for every $\sigma<\sigma_2$.
 \end{lemma}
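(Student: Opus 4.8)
The plan is to reduce to a fixed cylinder, localize so as to allow a Fourier transform in the time variable, and then split the time-frequency integral into a low-frequency part handled by spatial Rellich--Kondrachov compactness and a high-frequency tail controlled by the bound on $\partial_t T_n$ through interpolation; this is the Fourier-analytic form of the Aubin--Lions argument. Concretely, I would fix a cylinder $Q'=I'\times O'\subset\subset\Omega$, pick $Q'\subset\subset Q=I\times O\subset\subset\Omega$, and choose cut-offs $\zeta\in\mathcal{D}(I)$, $\chi\in\mathcal{D}(O)$ equal to $1$ on $I'$, $O'$. Setting $v_n=\zeta(t)\chi(x)T_n$ (extended by $0$), and using that multiplication by smooth compactly supported functions is bounded on every $H^s_x$, the sequence $v_n$ stays bounded in $L^2_tH^{\sigma_2}_x(\mathbb{R}^{1+3})$; since $\partial_t v_n=\zeta'\chi T_n+\zeta\chi\,\partial_t T_n$, it is also bounded in $L^2_tH^{\sigma_1}_x$ (using $\sigma_1\le\sigma_2$). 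These operators preserve weak convergence, so $v_n\rightharpoonup v:=\zeta\chi T$ in $L^2_tH^{\sigma_2}_x$ and $v=T$ on $Q'$; hence it suffices to show $v_n\to v$ strongly in $L^2_tH^\sigma_x(\mathbb{R}^{1+3})$. One may also assume $\sigma_1\le\sigma<\sigma_2$, the case $\sigma<\sigma_1$ following by monotonicity of the Sobolev norms.

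Taking the Fourier transform $\hat v_n(\tau,\cdot)$ in $t$ and applying Plancherel, I write $\|v_n\|_{L^2_tH^\sigma_x}^2\simeq\int_{\mathbb{R}}\|\hat v_n(\tau)\|_{H^\sigma_x}^2\,d\tau$. For the high-frequency tail I use the interpolation inequality $\|w\|_{H^\sigma_x}\le\|w\|_{H^{\sigma_1}_x}^{\theta}\|w\|_{H^{\sigma_2}_x}^{1-\theta}$ with $\sigma=\theta\sigma_1+(1-\theta)\sigma_2$ and $\theta=(\sigma_2-\sigma)/(\sigma_2-\sigma_1)\in(0,1]$, together with $\widehat{\partial_t v_n}(\tau)=i\tau\,\hat v_n(\tau)$, so that on $\{|\tau|>R\}$ one has $\|\hat v_n(\tau)\|_{H^{\sigma_1}}\le R^{-1}\|\widehat{\partial_t v_n}(\tau)\|_{H^{\sigma_1}}$. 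A Hölder estimate in $\tau$ with exponents $1/\theta$ and $1/(1-\theta)$ then gives $\int_{|\tau|>R}\|\hat v_n(\tau)\|_{H^\sigma}^2\,d\tau\le R^{-2\theta}\|\partial_t v_n\|_{L^2_tH^{\sigma_1}}^{2\theta}\|v_n\|_{L^2_tH^{\sigma_2}}^{2(1-\theta)}\le C\,R^{-2\theta}$, uniformly in $n$ (and likewise for $v$), which tends to $0$ as $R\to\infty$ since $\theta>0$.

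For the low-frequency part I fix $R$. Because $v_n$ is supported in the fixed bounded time interval, $\hat v_n(\tau)$ is bounded in $H^{\sigma_2}_x$ uniformly in $n$ and $\tau$; testing against $\rho(t)e^{-it\tau}g(x)$, with $\rho\in\mathcal{D}$ equal to $1$ on $\mathrm{supp}\,\zeta$ and $g\in H^{-\sigma_2}_x$, shows $\hat v_n(\tau)\rightharpoonup\hat v(\tau)$ weakly in $H^{\sigma_2}_x$ for each fixed $\tau$. Since all the $\hat v_n(\tau)$ are supported in the fixed compact $\bar O$, the Rellich--Kondrachov compactness of $H^{\sigma_2}\hookrightarrow H^\sigma$ upgrades this to strong convergence $\hat v_n(\tau)\to\hat v(\tau)$ in $H^\sigma_x$ for each $\tau$. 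The uniform bound $\|\hat v_n(\tau)-\hat v(\tau)\|_{H^\sigma}^2\le 4C^2$ supplies a dominating function on $\{|\tau|\le R\}$, so dominated convergence yields $\int_{|\tau|\le R}\|\hat v_n(\tau)-\hat v(\tau)\|_{H^\sigma}^2\,d\tau\to0$. Choosing $R$ large to make the tails small and then $n$ large for the low-frequency part completes the proof.

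I expect the main obstacle to be the high-frequency tail: it is exactly there that the boundedness of $\partial_t T_n$ is converted, via interpolation and Hölder in the time-frequency variable, into the uniform smallness that the mere weak convergence of $T_n$ cannot provide. The low-frequency regime is the \emph{soft} Rellich compactness in the space variable, and the localization step needs only the routine (but indispensable) facts that multiplication by a smooth cut-off is bounded on Sobolev spaces of every order and preserves weak convergence.
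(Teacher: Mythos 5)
The paper does not actually prove this lemma: it is stated and attributed to \cite{LEM, MAY}, and the argument given there is precisely the localization-plus-time-Fourier version of the Aubin--Lions compactness that you present (cut-off, Plancherel in $t$, interpolation $H^{\sigma_1}$--$H^{\sigma_2}$ with the factor $|\tau|^{-1}$ on high time frequencies, spatial Rellich compactness on low frequencies). Your write-up is correct and complete; the only point worth making explicit is that the high-frequency tail bound for the limit $v$ itself follows from weak lower semicontinuity of the $L^2_tH^{\sigma_1}_x$ norm applied to $\partial_t v_n\rightharpoonup\partial_t v$, which you implicitly use in the final combination.
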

 We apply Rellich's lemma to $\vec\omega_n$. We have
 $$ \partial_t\vec\omega_n =\Delta\vec\omega_n-\text{ \rm div }(\vec u_n\otimes\vec\omega_n-\vec\omega_n\otimes \vec u_n)-\vec\nabla\wedge \vec f_n,$$ 
 so that the sequence $(\partial_t \vec\omega_n)$ is bounded in $(L^2_t H^{\sigma_1}_x)_{\rm loc}$ for all $\sigma_1<-5/2$. Moreover, $\vec\omega_n$ is weakly convergent to $\vec \omega$ in $(L^2_t L^2_x)_{\rm loc}$. Thus,  $\vec\omega_n$ is strongly convergent in $(L^2_t H^{-1}_x)_{\rm loc}$. As $\vec u_n$ is weakly convergent
 to $\vec u$ in $(L^2_t H^1_x)_{\rm loc}$, we find that $\vec \omega_n\wedge\vec u_n$ is weakly convergent to $  \vec\omega\wedge\vec u$ in $\mathcal{D}'(\Omega)$.\\
 
 \centerline{\bf Step 2 : Harmonic corrections.}
We now end the proof of Theorem \ref{TheoCLM} by checking the dissipativity of the limit $\vec u$. We restate the theorem as a result of stability for dissipativity : 
 \begin{theorem}[\bf Dissipative limits] 
 Let $\Omega$ be a   domain in $\mathbb{R}\times\mathbb{R}^3$. Assume that we have sequences $\vec f_n$ of  divergence-free time-dependent vector fields and    $\vec u_n$  of   local  weak solutions of the Navier--Stokes equations on $\Omega$ (associated to the forces $\vec f_n$) such that, for each  cylinder $Q\subset\subset \Omega$,   we have
 \begin{itemize} 
 \item $\vec f_n\in L^2_t H^1_x(Q)$ and $\vec f_n$ converges weakly in $L^2_{t} H^1_{x}$ to a limit $\vec f$,
 \item the sequence $\vec u_n$ is bounded in $L^\infty_tL^2_x(Q)\cap L^2_t H^1_x(Q)$ and converges weakly in $ L^2_t H^1_x(Q)$ to  a limit $\vec u$, 
 \item for every $n$, $\vec u_n$ is dissipative.
 \end{itemize}
 Then the limit $\vec u$ is a  dissipative local weak solution on $\Omega$ of the Navier--Stokes equations associated to the force $\vec f$.
 \end{theorem}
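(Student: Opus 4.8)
The plan is to prove the only assertion left open, the dissipativity $M(\vec u)\geq 0$ of the limit; that $\vec u$ is a weak solution associated to $\vec f$ was already obtained in Step~1. Throughout I would work with the harmonic correction of type B) from Lemma~\ref{CORREC}, fixing a spherical cylinder $Q'\subset\subset Q$, a cut-off $\psi$ equal to $1$ near $\overline{Q'}$, and setting $\vec U_n=-\frac{1}{\Delta}\vec\nabla\wedge(\psi\vec\omega_n)$, so that $\vec H_n=\vec U_n-\vec u_n$ is harmonic in space and, by part~D) of the Energy balance theorem, $M(\vec U_n)=M(\vec u_n)\geq 0$. The point of this substitution is twofold: $\vec U_n$ carries a genuine force $\vec F_n\in L^2_{t,x}$ and pressure $P_n\in L^2_tL^q_x$ ($q<3/2$), so its energy balance has the classical shape; and $\vec U_n$ depends \emph{linearly} on the vorticity through the fixed operator $T=-\frac{1}{\Delta}\vec\nabla\wedge(\psi\,\cdot\,)$, which is smoothing of order one.

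First I would upgrade weak convergence of $\vec u_n$ to strong convergence of $\vec U_n$. Step~1 already furnishes $\vec\omega_n\to\vec\omega$ strongly in $(L^2_tH^{-1}_x)_{\rm loc}$, and $T$ is a fixed bounded operator $H^{-1}_x\to L^2_x$ locally, whence $\vec U_n=T\vec\omega_n\to\vec U=T\vec\omega$ strongly in $(L^2_tL^2_x)_{\rm loc}$. The same mapping properties, together with the uniform bound of $\vec\omega_n$ in $L^\infty_tH^{-1}_x\cap L^2_tL^2_x$, show that $\vec U_n$ is bounded in $L^\infty_tL^2_x\cap L^2_tH^1_x$, hence in $L^{10/3}_{t,x}$; interpolating the strong $L^2$ convergence against this bound gives strong convergence in every $L^{q}_{t,x}$ with $q<10/3$, in particular in $L^3_{t,x}$. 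The corrections $\vec H_n=\vec U_n-\vec u_n$ I would treat separately: being harmonic in $x$ and uniformly bounded in $L^\infty_tL^2_x$, interior elliptic estimates turn their weak convergence into strong convergence in $L^2_tW^{k,\infty}_{x,\rm loc}$ for every $k$.

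With these convergences I would pass to the limit term by term in
$$ M(\vec U_n)=-\partial_t\vert\vec U_n\vert^2+\Delta(\vert\vec U_n\vert^2)-2\vert\vec\nabla\otimes\vec U_n\vert^2-\text{ \rm div }(\vert\vec U_n\vert^2\vec U_n)-2\,\text{ \rm div }(P_n\vec U_n)+2\vec U_n\cdot\vec F_n. $$
Strong $L^2$ convergence handles the first two terms (via $\vert\vec U_n\vert^2\to\vert\vec U\vert^2$ in $L^1$), strong $L^3$ convergence handles the cubic term, and the pressure term is controlled by a weak--strong argument: $\vec U_n\otimes\vec U_n\to\vec U\otimes\vec U$ strongly in $L^{3/2}$, the Calder\'on--Zygmund operators defining $P_n$ then give $P_n\rightharpoonup P$ weakly in $L^2_tL^q_x$ ($q<3/2$), and testing against the strong limit of $\vec U_n$ in $L^{q'}_{t,x}$ ($3<q'<10/3$) yields $P_n\vec U_n\to P\vec U$ in $\mathcal D'$. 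The force term $2\vec U_n\cdot\vec F_n$ passes to the limit by the same mechanism, $\vec U_n\to\vec U$ strongly in $L^2$ against $\vec F_n\rightharpoonup\vec F$ weakly in $L^2$. Collecting the converging terms as $G_n\to G$, only the gradient term resists: weak $L^2$ convergence of $\vec\nabla\otimes\vec U_n$ produces a non-negative defect measure $\mu$ with $\vert\vec\nabla\otimes\vec U_n\vert^2\rightharpoonup\vert\vec\nabla\otimes\vec U\vert^2+\mu$. Hence $M(\vec U_n)\to M(\vec U)-2\mu$ in $\mathcal D'$; since each $M(\vec U_n)\geq 0$, the limit is a non-negative distribution, so $M(\vec U)\geq 2\mu\geq 0$. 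As $\vec H=\vec U-\vec u$ is a harmonic correction of the limit $\vec u$, part~D) of the Energy balance theorem finally gives $M(\vec u)=M(\vec U)\geq 0$.

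I expect the force term to be the main obstacle. Its delicate part is the contribution $(\vec\nabla\wedge\vec u_n)\wedge\vec H_n=\vec\omega_n\wedge\vec H_n$ to $\vec F_n$ --- precisely the term responsible, as noted after Lemma~\ref{CORREC}, for the force being no better than $L^2_{t,x}$. Here $\vec\omega_n$ converges only weakly in $L^2$, so this product can be passed to the limit solely because $\vec H_n$ converges strongly. The whole argument therefore rests on the uniform-in-$n$ bounds on the harmonic corrections $\vec H_n$ and on the interior elliptic regularity converting their weak convergence into strong convergence; verifying these bounds, and checking that the lower-order pieces of $P_n$ and $\vec F_n$ built from $\psi$, $\vec H_n$ and $\vec f_n$ do converge weakly, is the technical heart of the proof.
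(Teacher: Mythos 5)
Your overall strategy is the one the paper follows: replace $\vec u_n$ by the type-B harmonic correction $\vec U_n=-\frac 1\Delta\vec\nabla\wedge(\psi\vec\omega_n)$, upgrade to strong convergence of $\vec U_n$ in $L^2_{t,x}$ and $L^3_{t,x}$ (your route through the fixed order~$-1$ operator applied to the strongly convergent vorticities is a legitimate alternative to the paper's direct application of Rellich's lemma to $\vec U_n$ via the bound on $\partial_t\vec U_n$ in $L^2_tH^{-2}_x$), pass to the limit term by term in $M(\vec U_n)\geq 0$, absorb the gradient term into a non-negative defect measure, and conclude via $M(\vec u)=M(\vec U)$. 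But there is a genuine error, and it sits exactly where you locate the ``technical heart'': the claim that the corrections $\vec H_n=\vec U_n-\vec u_n$, being harmonic in $x$ and uniformly bounded in $L^\infty_tL^2_x$, converge \emph{strongly} in $L^2_tW^{k,\infty}_{x,\rm loc}$. Interior elliptic estimates give uniform spatial bounds and smoothness, but no compactness whatsoever in the time variable. The paper's own example $\vec u_n=\cos(nt)\,(x_1,-x_2,0)$ is a counterexample: there $\vec\omega_n=0$, so $\vec U_n=0$ and $\vec H_n=-\vec u_n$ is harmonic in $x$, uniformly bounded with all its spatial derivatives, yet converges only weakly because it oscillates in $t$. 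Consequently your passage to the limit in the force contribution $\vec\omega_n\wedge\vec H_n$ (weak convergence of $\vec\omega_n$ against a claimed strong convergence of $\vec H_n$) does not go through, and you cannot identify $\lim_n\vec F_n$ with the force $\vec F$ associated to $\vec U$ in this way; the same objection applies to the lower-order pieces of $P_n$ built from $\vec H_n$.

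The fix, which is how the paper proceeds, is to notice that you never need to identify the limits of $P_n$ and $\vec F_n$ with the pressure and force of $\vec U$. It suffices to extract subsequences with $P_n\rightharpoonup P_\infty$ in $L^{3/2}_{t,x}(Q)$ and $\vec F_n\rightharpoonup \vec F_\infty$ in $L^2_{t,x}(Q)$, which follows from mere boundedness (uniform in $n$, exactly the bounds you already invoke). The strong convergence of $\vec U_n$ then gives $\text{ div }(P_n\vec U_n)\rightarrow\text{ div }(P_\infty\vec U)$ and $\vec U_n\cdot\vec F_n\rightarrow\vec U\cdot\vec F_\infty$ in $\mathcal D'(Q)$, so $M(\vec U_n)$ converges to a non-negative distribution $\nu_1$ expressed in terms of $P_\infty$, $\vec F_\infty$ and the weak limit $\nu_\infty$ of $\vert\vec\nabla\otimes\vec U_n\vert^2$. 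To relate $\nu_1$ to $M(\vec U)$ one uses that the nonlinear term $\text{ div }(\vec U_n\otimes\vec U_n)$ does converge to $\text{ div }(\vec U\otimes\vec U)$ (again by the strong convergence of $\vec U_n$), so that passing to the limit in the equation for $\vec U_n$ yields $-\vec\nabla P_\infty+\vec F_\infty=-\vec\nabla P+\vec F$; writing $\vec F_\infty-\vec F=\vec\nabla(P_\infty-P)$ with $P_\infty-P$ harmonic in $x$ and using $\text{ div }\vec U=0$, one checks that $-\text{ div }((P_\infty-P)\vec U)+\vec U\cdot(\vec F_\infty-\vec F)=0$, whence $M(\vec U)=\nu_1+2(\nu_\infty-\vert\vec\nabla\otimes\vec U\vert^2)\geq 0$ by lower semicontinuity. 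With that replacement (and dropping the false strong-convergence claim for $\vec H_n$) your argument closes and coincides with the paper's.
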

 
 \begin{proof}  We already know that $\vec u $  is a  local weak solution on $\Omega$ of the Navier--Stokes equations associated to the force $\vec f$.  We have to prove its dissipativity.\\
 
  Let $Q\subset\subset \Omega$ be a cylinder and $\psi\in\mathcal{D}(\Omega)$ be a cut-off function which is equal to $1$ on a neighborhood of $Q$. In order to prove that $\vec u$ is dissipative, we shall prove that the harmonic correction $\vec U=\vec H+\vec u=-\frac 1 \Delta \vec\nabla\wedge\left(\psi (\vec\nabla\wedge\vec u)\right)$ is suitable.
  
  We define as well  $\vec U_n= -\frac 1 \Delta \vec\nabla\wedge\left(\psi (\vec\nabla\wedge\vec u_n)\right)$. The weak convergence of $\vec u_n$ in $(L^2_t H^1_x)_{\rm loc}(\Omega)$ implies the weak convergence of $\vec U_n$ to $\vec U$ in $L^2_tH^1_{x}(Q)$. Moreover, the uniform boundedness of the sequence $(\vec u_n)_{n\in \mathbb{N}}$ in $(L^2_t H^1_x\cap L^\infty_t L^2_x)_{\rm loc}(\Omega)$ and of the sequence $(\vec f_n)_{n\in \mathbb{N}}$ in $(L^2_t H^1_x)_{\rm loc}(\Omega)$ implies that the sequences of pressure $P_n$ and of forces $\vec F_n$ associated to $\vec U_n$ are uniformly bounded (respectively in $L^{3/2}_tL^{3/2}_x(Q)\cap L^2_t L^{6/5}_x(Q)$ and in $L^2_t L^2_x(Q)$). Thus, $\partial_t\vec U_n$ is bounded in $L^2_t H^{-2}_x(Q)$ and Rellich's lemma gives us that $\vec U_n$ is strongly convergent to $\vec U$ in $(L^2_t L^2_x)_{\rm loc}(Q)$ (and, since $\vec U_n$ is bounded in $L^{10/3}_t L^{10/3}_x(Q)$, we have strong convergence in $(L^3_t L^3_x)_{\rm loc}(Q)$ as well).\\
  
  Taking subsequences, we may assume that the bounded sequences $P_n$ (in \\  $L^{3/2}_tL^{3/2}_x(Q)$), $\vec F_n$ (in $L^2_t L^2_x(Q)$) and $\vert\vec\nabla U_n\vert^2$ (in $L^1_t L^1_x(Q)$) converge weakly in $\mathcal{D}'$ to limits $P_\infty\in L^{3/2}_tL^{3/2}_x(Q)$, $\vec F_\infty\in L^2_t L^2_x(Q)$ and $\nu_\infty$ (a non-negative finite measure on $Q$). In particular, we have enough convergence to see that every term in the right-hand side of equality
$$ M(\vec u_n)=- \partial_t\vert\vec U_n\vert^2+ \Delta(\vert \vec U_n\vert^2)-2\vert\vec\nabla\otimes\vec U_n\vert^2-  \text{ \rm div }(\vert \vec U_n\vert^2\vec U_n)  - 2   \text{ \rm div }(P_n\vec U_n)    +2\vec U_n\cdot \vec U_n $$ 
has a limit, so that $\nu_1=\lim_{n\rightarrow +\infty}M(\vec U_n)$ exists and
$$ \nu_1=- \partial_t\vert\vec U\vert^2+ \Delta(\vert \vec U\vert^2)-2\nu_\infty-  \text{ \rm div }(\vert \vec U\vert^2\vec U)  - 2  \text{ \rm div }(P_\infty\vec  U)     +2\vec U\cdot\vec F_\infty .$$ 

As $M(\vec U_n)\geq 0$, we find that $\nu_1\geq 0$. Moreover, by the Banach--Steinhaus theorem, we find that $\nu_2=\nu_\infty-\vert \vec\nabla\otimes \vec U\vert^2\geq 0$.  As $M(\vec U)=\nu_1+2\nu_2$, we have $M(\vec U)\geq 0$. Hence, $\vec U$ is suitable and $\vec u$ is dissipative.
 \end{proof}
 
\bibliographystyle{amsalpha}

\providecommand{\bysame}{\leavevmode\hbox to3em{\hrulefill}\thinspace}
\providecommand{\MR}{\relax\ifhmode\unskip\space\fi MR }
\providecommand{\MRhref}[2]{%
  \href{http://www.ams.org/mathscinet-getitem?mr=#1}{#2}
}
\providecommand{\href}[2]{#2}
\begin{thebibliography}{}

\end{thebibliography}


\begin{thebibliography}{A}


 \bibitem[Ca]{CKN}  L. Caffarelli, R. Kohn \& L. Nirenberg. \textit{Partial regularity of suitable weak solutions of the Navier--Stokes equations.}
    Comm. Pure Appl. Math., \textbf{35} (1982), 771--831.
    \bibitem[Ch]{CLM}
D. Chamorro, PG. Lemari\'e-Rieusset \& K. Mayoufi. \textit{The role of the pressure in the partial regularity theory for weak solutions of the Navier--Stokes equations}.
 https://arxiv.org/abs/1602.06137, (2016).
\bibitem[D]{DUC}   J. Duchon \& R. Robert. \textit{Inertial energy dissipation for weak solutions of incompressible Euler and Navier--Stokes equations.} 
 Nonlinearity, \textbf{13} (2000),  249--255.
\bibitem[K]{KUK}  I. Kukavica. \textit{On partial regularity for the Navier--Stokes equations.} 
   Discrete and continuous dynamical systems,
\textbf{21}    (2008), 717--728.
\bibitem[La]{LAD}   O. Ladyzhenskaya \& G. Seregin. \textit{On partial regularity of suitable weak solutions to the three-dimension Navier--Stokes equations.}
   J. Math. Fluid Mech., \textbf{1}  (1999), 356--387.
    \bibitem[Le]{LEM}
P.G.  Lemari\'e--Rieusset. \textit{The Navier--Stokes problem in the XXIst century}, Chapman \& Hall/CRC, (2016).
    \bibitem[M]{MAY}
K. Mayoufi. \textit{Les in\'egalit\'es d'\'energie locales dans la th\'eorie des \'equations de Navier--Stokes}, Ph. D. Univ. \'Evry, (2017).
\bibitem[O]{OLE}   M. OÕLeary. \textit{Conditions for the local boundedness of solutions of the Navier--Stokes system in three dimensions.}
  Comm. Partial Differential Equations, \textbf{28} (2003),   617--636.
 \bibitem[Sc]{SCH}   V. Scheffer. \textit{Hausdorff measure and the Navier--Stokes equations.} 
  Comm. Math. Phys., \textbf{55}  (1977), 97--112.
\bibitem[Se]{SER}  J. Serrin. \textit{On the interior regularity of weak solutions of the Navier--Stokes equations. }
  Arch. Rat. Mech. Anal.,
\textbf{9}  (1962), 187--195.
\bibitem[W]{WOL}
J. Wolf.
\textit{A direct proof of the Caffarelli--Kohn--Nirenberg theorem}.   Parabolic and Navier--Stokes equations. Part 2, 533--552,
Banach Center Publ., 81, Part 2, Polish Acad. Sci. Inst. Math., Warsaw, 2008.
 \end{thebibliography}

\end{document}